\newtheorem{proposition}{Proposition}
\newtheorem{remark}{Remark}
\newtheorem{theorem}{Theorem}
\newtheorem{corollary}{Corollary}
\newtheorem{example}{Example}
\numberwithin{equation}{section}
\DeclareMathAlphabet{\mathcal}{OMS}{cmsy}{m}{n}
\title{Linearly implicit local and global energy-preserving methods for PDEs with a cubic Hamiltonian}
\author{Sølve Eidnes$^*$ and Lu Li%
\thanks{Department of Mathematical Sciences, NTNU, N--7491 Trondheim, Norway.\\Sølve Eidnes: \texttt{solve.eidnes@ntnu.no}; Lu Li (corresponding author): \texttt{lu.li@ntnu.no}.}}
\begin{document}
\maketitle
\begin{abstract}
We present linearly implicit methods that preserve discrete approximations to local and global energy conservation laws for multi-symplectic PDEs with cubic invariants. The methods are tested on the one-dimensional Korteweg--de Vries equation and the two-dimensional Zakharov--Kuznetsov equation; the numerical simulations confirm the conservative properties of the methods, and demonstrate their good stability properties and superior running speed when compared to fully implicit schemes.

\vspace{4pt}
\textbf{Keywords}: Structure-preserving methods, multi-symplectic PDEs, Kahan's method.

\vspace{2pt}
\textbf{Classification}: 37K05, 65M06, 65P10
\end{abstract}

\section{Introduction} 
In recent years, much attention has been given to the design and analysis of numerical methods for differential equations that can capture geometric properties of the exact flow. The increased interest in this subject can mainly be attributed to the superior qualitative behaviour over long time integration of such structure-preserving methods, see \cite{hairer2006geometric, furihata2011discrete, christiansen2011topics}. 
A popular class of structure-preserving methods are energy-preserving methods. Energy preservation has a far-reaching importance throughout the physical sciences \cite{Feynmanbook, bridges1997multi}. In particular, it has been found to be crucial in the proof of stability for several numerical methods, see e.g. \cite{li1995finite}.

Energy-preserving methods are well studied for finite-dimensional Hamiltonian systems \cite{labudde1975energy,mclachlan1999geometric,brugnano10hbv,celledoni2012preserving}. It is also highly conceivable that the ideas behind the finite-dimensional setting can be extended to the infinite-dimensional Hamiltonian systems or Hamiltonian partial differential equations (PDEs) \cite{bridges2001multi}. There are two popular ways to construct energy-preserving methods for Hamiltonian PDEs. One approach is to semi-discretize the PDE in space so that one obtains a system of Hamiltonian ordinary differential equations (ODEs), and then apply an energy-preserving method to this semi-discrete system, see for example \cite{celledoni2012preserving}. In this way, it is straightforward to generalise the energy-preserving methods for finite-dimensional Hamiltonian systems to Hamiltonian PDEs. However, such methods  conserve only a global energy that relies on a proper boundary condition, such as a periodic boundary condition. If this is not present, the energy-preserving property will be destroyed. The other approach is based on a reformulation of the Hamiltonian PDE into a multi-symplectic form, which provides the PDE with three local conservation laws: the multi-symplectic conservation law, the energy conservation law and the momentum conservation law \cite{bridges1997multi,bridges1997geometric,leimkuhler2004simulating}. Then one may consider methods that preserve the local conservation laws, see for example multi-symplectic integrators \cite{Sunmultisym} and integrators which preserve the energy conservation law or the momentum conservation law \cite{EPmultisym, Hydonlocalconserv}. Although multi-symplectic integrators, like symplectic integrators, are proven to give good numerical approximations of the solution, such methods can only preserve quadratic invariants \cite{MR2221062, EPmultisym}. 
The focus of this paper is on integrators that preserve the local energy conservation law \cite{wang2008local}. These locally defined properties are not dependent on the choice of boundary conditions, giving the methods that preserve local energy an advantage over methods that preserve a global energy, especially since local conservation laws will always lead to global conservation laws whenever periodic boundary conditions are considered. 
The concept of a multi-symplectic structure for PDEs was introduced by Bridges in \cite{bridges1997multi,bridges1997geometric}, see also \cite{marsden1998multisymplectic} for a framework based on a Lagrangian formulation of the Cartan form. 
Local energy-preserving methods were first studied in \cite{reich2000multi}, and have garnered much interest recently, see for example \cite{wang2008local, gong2014some,li2015general}.

Most of the local energy-preserving methods proposed so far are fully implicit methods, for which a non-linear system must be solved at each time step.  This is normally done by using an iterative solver where a linear system is solved at each iteration, which can lead to computationally expensive procedures, especially since the number of iterations needed in general increases with the size of the system. A fully explicit method on the other hand, may over-simplify the problem and often has inferior stability properties, so that a strong restriction on the grid ratio is needed. A good alternative may therefore be to develop linearly implicit schemes, where the solution at the next time step is found by solving only one linear system. 

One example of linearly implicit methods for Hamiltonian ODEs is Kahan's method,  which was designed for solving quadratic ODEs \cite{kahan1993unconventional} and whose geometric properties have been studied in a series of papers by Celledoni et al.\ \cite{celledoni2012geometric, celledoni2012integrability, celledoni2018geometric}. For Hamiltonian PDEs, Matsuo and Furihata proposed the idea of using multiple points to discretize the variational derivative and thus design linearly implicit energy-preserving schemes \cite{matsuo2001dissipative}. Dahlby and Owren generalised this concept and developed a framework for deriving linearly implicit energy-preserving multi-step methods for Hamiltonian PDEs with polynomial invariants \cite{dahlby2011general}. A comparison of this approach and Kahan's method applied to PDEs is given in \cite{eidnes2019linearly}. Recently, more work has been put into developing linearly implicit energy-preserving schemes for Hamiltonian PDEs, e.g.\ the partitioned averaged vector field (PAVF) method \cite{cai2018partitioned} and schemes based on the invariant energy quadratization (IEQ) approach \cite{yang2017numerical} or the multiple scalar auxiliary variables (MSAV) approach \cite{jiang2019linearly}.
However, little attention has been given to linearly implicit local energy-preserving methods. To the best of the authors' knowledge, the only existing method is one based on the IEQ approach, specific for the sine-Gordon equation \cite{jiang2018linear}. In this paper, we use Kahan's method to construct a linearly implicit method that preserves a discrete approximation to the local energy for multi-symplectic PDEs with a cubic energy function. This class is extensive, and includes among other PDEs the Korteweg--de Vries (KdV) equation, the Benjamin--Bona--Mahony (BBM) equation \cite{li13new}, Boussinesq-type systems \cite{Boussinesq}, and the Camassa--Holm equation \cite{Cohnmultisym}.

The rest of this paper is organized as follows. First, we give an overview of Kahan's method and formulate it by using a polarised energy function. A brief introduction to multi-symplectic PDEs and their conservation laws are presented in Section \ref {multi-symplectic PDES properties}. In Section \ref{New schemes}, new linearly implicit local and global energy-preserving schemes are presented. Numerical examples for the KdV and Zakharov--Kuznetsov equations are given in Section \ref {Numerical example}, before we end the paper with some concluding remarks.

\section{Kahan's method}\label{Kahan}
Consider an ODE system
\begin{equation}
\dot{y} = f(y) = \hat{Q}(y)+\hat{B}y+\hat{c}, \quad y \in \mathbb{R}^M,
\label{original quadratic form}
\end{equation} 
where $\hat{Q}(y)$ is an $\mathbb{R}^M$ valued quadratic form, $\hat{B}\in \mathbb{R}^{M\times M}$ is a symmetric constant matrix, and $\hat{c} \in \mathbb{R}^{M}$ is a constant vector. Kahan's method is then given by 
\begin{align*}
\frac {y^{n+1}-y^n}{\Delta t} =\bar{Q}(y^n,y^{n+1})+\hat{B}\frac {y^{n}+y^{n+1}}{2} +\hat{c},
\end{align*}
where 
\begin{align*}
\bar{Q}(y^{n},y^{n+1}) = \frac {1}{2}\big(\hat{Q}(y^{n}+y^{n+1})-\hat{Q}(y^{n})-\hat{Q}(y^{n+1})\big)
\end{align*}
is the symmetric bilinear form obtained by polarisation of the quadratic form $\hat{Q}$ \cite{celledoni2012geometric}. Polarisation, which maps a homogeneous polynomial function to a symmetric multi-linear form in
more variables, was used to generalise Kahan's method to higher degree polynomial vector fields in \cite{celledoni2015discretization}.

Suppose we restrict the problem \eqref{original quadratic form} to be a Hamiltonian system on a Poisson vector space with a constant Poisson structure:
\begin{equation}\label{original eqode-quadratic}
\dot{y} = A\nabla H(y), 
\end{equation}
where $A$ is a constant skew-symmetric matrix, and $H:\mathbb{R}^M \rightarrow \mathbb{R}$ is a cubic polynomial function. We first consider the Hamiltonian $H$ to be homogeneous.
Then, following the result in Proposition $2.1$ of \cite{celledoni2015discretization}, Kahan's method can be reformulated as
\begin{align}\label{Kahan-map with H tensor}
 \frac{y^{n+1}-y^n}{\Delta t} = 3 A\bar{H}(y^n,y^{n+1},\cdot),
\end{align}
where $\bar{H}(\cdot,\cdot,\cdot):\mathbb{R}^M\times\mathbb{R}^M\times\mathbb{R}^M \rightarrow \mathbb{R}$ is a symmetric $3$-tensor satisfying $\bar{H}(x,x,x) = H(x)$. Consider the $3$-tensor $\bar{H}(x,y,z)=x^TQ(y)z$, where $ Q(y)=\frac{1}{6}\nabla^2 H(y)$, with $\nabla^2 H$ being the Hessian of $H$; then we can rewrite Kahan's method \eqref{Kahan-map with H tensor} as
\begin{align}\label{Kahan-map-tensor-partial}
 \frac{y^{n+1}-y^n}{\Delta t} = 3A\frac {\partial \bar{H}}{\partial x}\bigg\lvert_{(y^n,y^{n+1})},
\end{align}
where $\frac {\partial \bar{H}}{\partial x}$ denotes the partial derivative with respect to the first argument of $\bar{H}$.

Consider then the cases where the Hamiltonian in problem \eqref{original eqode-quadratic} is non-homogen\-eous, i.e.\ of the general form
\begin{align}\label{energy nonhomogeneous}
 H(y)=y^TQ(y)y+y^TBy+c^Ty+d,
\end{align}
where $Q(y)$ is the linear part of $\nabla^2 H(y)$ and thus a symmetric matrix whose elements are homogeneous linear polynomials, $B$ is the constant part of $\nabla^2 H(y)$ and thus a symmetric constant matrix, $c$ is a constant vector and $d$ is a constant scalar. We follow the technique in \cite{celledoni2012geometric}, adding one variable to $y = (y_1,\dots,y_M)^T$ to get $\tilde{y}=(y_0,y_1,\dots,y_M)^T$, extending $A$ to $\tilde{A}$ by adding a zero initial row and a zero initial column, considering a homogeneous function $\tilde{H}(\tilde{y})$ based on the non-homogeneous Hamiltonian $H(y)$ such that $\tilde{H}(\tilde{y})\lvert_{y_0=1}=H(y)$, and finally
solving instead of \eqref{original eqode-quadratic} the equivalent, homogeneous cubic Hamiltonian problem
\begin{equation*}
\dot{\tilde{y}}=\tilde{A}\nabla \tilde{H}(\tilde{y})
\end{equation*}
with $y_0=1$. In this way we can still get the reformulation of Kahan's method as \eqref{Kahan-map-tensor-partial} with
\begin{align}\label{eq:polarisedH}
\bar{H}(x,y,z)  = x^TQ(y)z+\frac {1}{3} (x^T B y + y^T B z + z^T B x)+ \frac{1}{3} c^T (x+y+z) + d.
\end{align} 
\begin{remark}
The $\mathbb{R}$-valued function $\bar{H}(x,y,z)$ in \eqref{eq:polarisedH} has the following properties:
\begin{enumerate}  
\item $\bar{H}(x,y,z)$ is symmetric\footnote{Denote the elements in $Q(y)$ by $q_{ij}y=\sum_{k}q_{ij}^ky_k$, where $q_{ij}^k$, $i,j,k=1,\cdots,M$, are scalars and $y_k$ is the $k$th element of $y$. We have that $q_{ij}^k$ satisfies $q_{ij}^k=q_{ki}^j=q_{jk}^i$ since $q_{ij}^k=\frac{1}{6}\frac{\partial^3 \bar{H}}{\partial y_i\partial y_j\partial y_k}$, which is unchanged under any permutation of $i,j, k$. This provides the symmetry of  $\bar{H}(x,y,z)$.} w.r.t. $x$, $y$ and $z$, 
\item $\bar{H}(x,x,x)=H(x)$,
\item $\frac{\partial \bar{H}(x,y,z)}{\partial x}=Q(y)z+\frac {B(y+z)}{3}+\frac {c}{3}$ is symmetric w.r.t. $y$ and $z$.
\end{enumerate}\label {Kahan's scheme with property of the polarized energy}
\end{remark}
In this paper, we will use the form of Kahan's method in \eqref{Kahan-map-tensor-partial} to prove the energy preservation of the proposed methods.

\section{Conservation laws for multi-symplectic PDEs}\label{multi-symplectic PDES properties}
Many PDEs, including all one-dimensional Hamiltonian PDEs, can be written on the multi-symplectic form
\begin{equation}\label{Multisym_PDEs}
K z_t+ L z_x=\nabla S(z), \quad z \in \mathbb{R}^l,\quad (x,t)\in \mathbb{R} \times \mathbb{R},
      \end{equation}
      where $K$, $L \in \mathbb{R}^{l\times l}$ are two constant skew-symmetric matrices and $S: \mathbb{R}^l \mapsto \mathbb{R}$ is a scalar-valued function. Following the results about multi-symplectic structure in \cite{bridges1997multi}, it can be shown that multi-symplectic PDEs satisfy the following local conservation laws \cite{moore2003multi}: the multi-symplectic conservation law
     \begin{equation*}\label{MultiPDEs_multisimplectic conservation law}
\partial_t \omega+\partial_x\kappa=0, \quad\omega=dz\wedge K_+dz,\quad\kappa=dz\wedge L_+dz,
      \end{equation*}    
the local energy conservation law (LECL) 
   \begin{equation}\label{MultiPDEs_energy conservation law}
E_t +F_x=0, \quad E=S(z)+z_x^TL_+z,\quad F=-z_t^TL_+z,
      \end{equation}    
and the local momentum conservation law (LMCL)
  \begin{equation*}\label{MultiPDEs_momentum conservation law}
I_t +G_x=0, \quad G=S(z)+z_t^T K_+z,\quad I=-z_x^T K_+z,
      \end{equation*}    
 where $K_+$ and $L_+$ satisfy
 $$K=K_+-K_+^T,   \quad    L=L_+-L_+^T.$$ 
Decomposition of the matrices is done to make deduction of the conservation laws for energy and momentum more efficient \cite[Section~12.3.1]{leimkuhler2004simulating}.

The multi-symplectic form \eqref{Multisym_PDEs} can also be generalised to problems in higher dimensional spaces. Consider $d$ spatial dimensions; based on the work by Bridges \cite{bridges1997multi}, a multi-symplectic PDE can then be written as
\begin{equation}\label{Multisym_PDEs_multidimension}
K z_t+\sum_{\alpha=1}^{d}L^\alpha z_{x_\alpha}=\nabla S(z), \quad z \in \mathbb{R}^l,\quad (x,t)\in \mathbb{R}^d\times\mathbb{R},
      \end{equation}
where $K$, $L^\alpha \in \mathbb{R}^{l\times l}$ $(\alpha=1,\ldots,d)$ are constant skew-symmetric matrices and  $S:\mathbb{R}^l\rightarrow\mathbb{R}$ is a smooth functional. Equation \eqref{Multisym_PDEs_multidimension} has the following local energy conservation law:
   \begin{equation}\label{MultiPDEs energy conservation law}
E_t +\sum_{\alpha=1}^{d}F^{\alpha}_{x_{\alpha}}=0,
      \end{equation}    
where $E(z)=S(z)+\sum_{\alpha=1}^{d}z_{\alpha}^T L^{\alpha}_+z$, $F^{\alpha}=-z_t^TL^{\alpha}_+z$, and $L^{\alpha}_+$ are splittings of $L^{\alpha}$ satisfying $L^{\alpha}=L^{\alpha}_+-({L^{\alpha}_+})^T$.

Say we have \eqref{Multisym_PDEs_multidimension} defined on the spatial domain $\Omega\in\mathbb{R}^d$ with periodic boundary conditions. Integrating over the domain $\Omega$ on both sides of the equation \eqref{MultiPDEs energy conservation law} and using the periodic boundary condition then leads to the global energy conservation law for the multi-symplectic PDEs,
  \begin{equation}\label{global_EC}
\frac{d}{dt}\mathcal{E}(z) =0,
  \end{equation}
  where $\mathcal{E}(z)=\int_\Omega E(z)d\Omega$.

\begin{example}\label{ex:KdV}
\textbf{Korteweg--de Vries equation.} Consider the KdV equation for modeling shallow water waves,
\begin{equation}\label{eq_KdV}
u_t + \eta u u_x+\gamma^2 u_{xxx}  = 0,
\end{equation}
where $\eta, \gamma \in \mathbb{R}$. Introducing the potential $\phi_x=u$, momenta $v=\gamma u_x$ and the variable $w=\gamma v_x \phi_t+\frac{\gamma^2u^2}{2}$ by the covariant Legendre transform from the Lagrangian, we obtain
\begin{equation}\label{Kdv_multisym_element}
\begin{split}
\frac{1}{2}u_t+w_x&=0,\\
-\frac{1}{2}\phi_t-\gamma v_x&=-w+\frac{\eta}{2}u^2,\\
\gamma u_x&=v,\\
-\phi_x&=-u,
\end{split}
\end{equation}
from which we find the multi-symplectic formulation \eqref{Multisym_PDEs} for the KdV equation with $z = (\phi,u,v,w)^T$, the Hamiltonian
$S(z)=\frac{v^2}{2}-uw+\frac{\eta u^3}{6}$, and
\begin{equation*}
K=\begin{bmatrix}
    0       &\frac{1}{2} & 0 & 0 \\
    -\frac{1}{2}       & 0 & 0 & 0 \\
      0      & 0 & 0 & 0 \\
      0      & 0 & 0 & 0  \\
\end{bmatrix},\qquad
L=
\begin{bmatrix}
    0       &0 & 0 &1  \\
   0       & 0 & -\gamma & 0  \\
      0      & \gamma & 0 & 0  \\
      -1      & 0 & 0 & 0 \\
\end{bmatrix}.
\end{equation*}
As for the conservation laws, there are many choices of $K_+$ and $L_+$, for example $K_+=\frac{K}{2}, L_+=\frac{L}{2}$, or $K_+$ and $L_+$ being the upper triangular parts of $K$ and $L$, respectively.
\end{example}

\begin{example}\label{ex:ZK}
\textbf{Zakharov--Kuznetsov equation.}
Zakharov and Kuznetsov introduced in \cite{zakharov1974threedimensional} a (2+1)-dimensional generalisation of the KdV equation which includes weak transverse variation,
\begin{equation}
u_t + u u_x + u_{xxx} + u_{x y y}=0.
\label{eq:Zakharov-Kuznetsov}
\end{equation}
A multi-symplectification of this leads to a system (\ref{Multisym_PDEs_multidimension}) for two spatial dimensions,
\begin{equation}\label{Multisym_PDEs_3D}
K z_t + L^1 z_x + L^2 z_y = \nabla S(z), \quad z \in \mathbb{R}^6, \quad (x,y,t)\in \mathbb{R}^2 \times \mathbb{R}.
\end{equation}
Following \cite{bridges2001multi}, we have that \eqref{eq:Zakharov-Kuznetsov} is equivalent to a system of first-order PDEs,
\begin{equation}\label{eq:ZKsystem}
\begin{split}
\phi_x &= u,\\
\frac{1}{2}\phi_t + v_x + w_y &= p - \frac{1}{2}u^2,\\
w_x - v_y &= 0,\\
-\frac{1}{2}u_t - p_x &= 0,\\
-u_x + q_y &= -v,\\
-q_x - u_y &= -w,
\end{split}
\end{equation}
which is \eqref{Multisym_PDEs_3D} with $z = (p,u,q,\phi,v,w)^T$, the Hamiltonian $S(z) = u p - \frac{1}{2}(v^2+w^2) - \frac{1}{6}u^3$, and the skew-symmetric matrices $K, L^1, L^2$ whose only non-zero elements are
\begin{align*}
&k_{2,4} = \frac{1}{2}, \quad k_{4,2} = -\frac{1}{2},\\
&l^1_{1,4} = l^1_{2,5} = l^1_{3,6} = 1, \quad l^1_{4,1} = l^1_{5,2} = l^1_{6,2} = -1,\\
&l^2_{2,6} = l^2_{5,3} = 1, \quad l^2_{6,2} = l^2_{3,5} = -1.
\end{align*}
\end{example}
   
\section{New linearly implicit energy-preserving schemes}\label{New schemes}
In \cite{gong2014some}, Gong, Cai and Wang present a scheme that preserves the local energy conservation law \eqref{MultiPDEs_energy conservation law} of a one-dimensional multi-symplectic PDE, obtained by applying the midpoint rule in space and the averaged vector field (AVF) method in time. 
They also present schemes that preserve the global energy, but not \eqref{MultiPDEs_energy conservation law}, obtained by considering spatial discretizations that preserve the skew-symmetric property of the difference operator $\partial_x$. We build on their work by considering Kahan's method for the discretization in time, ensuring linearly implicit schemes and also energy preservation.

To introduce our new schemes, we begin with some basic difference operators:
\begin{align*}
\delta_{t} v_j^n &:= \frac{v_j^{n+1} -v_j^n}{\Delta t}, & \delta_{x} v_j^n &:= \frac{v_{j+1}^n -v_{j}^n}{\Delta x}\\
\mu_{t} v_j^n &:= \frac{v_j^{n+1} +v_j^n}{2},& \mu_{x}v_j^n &:= \frac{v_{j+1}^{n} +v_j^n}{2}.
\end{align*}
The operators satisfy the following properties \cite{wang2008local}:
\begin{enumerate} 
		\item All the operators commute with each other, e.g.
		
		$\delta_{t}\delta_{x}v_j^n=\delta_{x}\delta_{t}v_j^n,\quad \delta_{t}\mu_{x}v_j^n=\mu_{x}\delta_{t}v_j^n,\quad \mu_{t}\delta_{x}v_j^n=\delta_{x}\mu_{t}v_j^n$.
		\item They satisfy the discrete Leibniz rule
		$$\delta_{t}(uv)_j^n=(\varepsilon u_j^{n+1}+(1-\varepsilon )u_j^n)\delta_{t}v_j^n+\delta_{t}u_j^n((1-\varepsilon) v_j^{n+1}+ \varepsilon v_j^{n}),\quad 0\le\varepsilon\le 1.$$
		Specifically,
\begin{align*}
\delta_{t}(uv)_j^n&=u_j^{n}\delta_{t}v_j^n+\delta_{t}u_j^nv_j^{n+1},\qquad\text{for}\quad \varepsilon=0,\\
\delta_{t}(uv)_j^n&=\mu_{t}u_j^{n}\delta_{t}v_j^n+\delta_{t}u_j^n\mu_{t}v_j^{n},\quad\text{for}\quad \varepsilon=\frac{1}{2},\\
\delta_{t}(uv)_j^n&=u_j^{n+1}\delta_{t}v_j^n+\delta_{t}u_j^nv_j^{n},\qquad\text{for}\quad \varepsilon=1.
      \end{align*}
	\end{enumerate}
One can obtain a series of similar commutative equations and discrete Leibniz rules that are not presented here, but which are also crucial in the proofs of the preservation properties of the schemes to be introduced in the remainder of this section.

\subsection{A local energy-preserving scheme for multi-symplectic PDEs}\label{Local energy-preserving algorithm for multi-symplectic PDEs}
In this section, we apply the midpoint rule in space and Kahan's method in time to construct a local energy-preserving method for multi-symplectic PDEs. Introducing the concept by first considering the one-dimensional system \eqref{Multisym_PDEs}, we apply the midpoint rule in space to get
  \begin{equation*}
K\partial_t \mu_x z_j+L\delta_xz_j=\nabla S(\mu_x z_j), \quad j=0,\ldots,M-1.
      \end{equation*}  
Then applying Kahan's method gives us the linearly implicit local energy-preserving (LILEP) scheme
  \begin{equation}\label{eq:LILEP}
K\delta_{t}\mu_{x}z_j^n+L\delta_x \mu_{t}z_j^n= 3\frac {\partial \bar{S}}{\partial x}\bigg\rvert_{(\mu_x z_j^n,\mu_x z_j^{n+1})}.
      \end{equation} 
Here we consider $S$ of the form $S(y)=y^TQ(y)y+y^TBy+c^Ty+d$, as in \eqref{energy nonhomogeneous}, and accordingly $\bar{S}(x,y,z)$ of the form \eqref{eq:polarisedH}.
      \begin{theorem}\label{proof of ECL for LECL scheme}
 The scheme \eqref{eq:LILEP} satisfies the discrete local energy conservation law
  \begin{equation}\label{eq:discreteLECL}
\delta_{t} (\bar{E}_L)_{j}^n +\delta_{x}(\bar{F}_L)_j^{n}=0,
      \end{equation}   
      where
  \begin{align}
  \begin{split}
 (\bar{E}_L)_{j}^n&=\bar{S}(\mu_x z_j^n, \mu_x z_j^n, \mu_x z_j^{n+1})\\
 &+\frac{1}{3}(\delta_{x}z_j^n)^TL_+\mu_x z_j^n+\frac{1}{3}(\delta_{x}z_j^n)^TL_+\mu_x z_j^{n+1}+\frac{1}{3}(\delta_{x}z_j^{n+1})^TL_+\mu_x z_j^{n},\end{split} \label{eq:localenergy} \\ 
 (\bar{F}_L)_j^{n}&=-\frac{1}{3}(\delta_{t}z_j^n)^TL_+\mu_t z_j^{n} - \frac{1}{3}(\delta_{t}z_j^n)^TL_+\mu_t z_j^{n+1} - \frac{1}{3}(\delta_{t}z_j^{n+1})^TL_+\mu_{t}z_j^n. \nonumber
      \end{align}         
  \end{theorem}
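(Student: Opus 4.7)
The plan is to verify the discrete local energy conservation law \eqref{eq:discreteLECL} by taking inner products of the scheme \eqref{eq:LILEP} with carefully chosen test vectors and combining the resulting identities, using the skew-symmetry of $K$, the splitting $L = L_+ - L_+^T$, the discrete Leibniz rule together with the commutativity of $\delta_t,\mu_t,\delta_x,\mu_x$, and the affine-linearity and symmetry properties of $\bar{S}$ collected in Remark~\ref{Kahan's scheme with property of the polarized energy}.

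First, I would take the inner product of \eqref{eq:LILEP} at time level $n$ with $\mu_x \delta_t z^n_j = \delta_t \mu_x z^n_j$. The $K$-term vanishes by skew-symmetry. For the $L$-term, writing $L = L_+ - L_+^T$ and applying the discrete Leibniz rule with $\varepsilon = \tfrac{1}{2}$ (the cross terms cancelling because $\delta_x\delta_t z = \delta_t\delta_x z$ and $\mu_x\mu_t z = \mu_t\mu_x z$) produces the identity
\[
(\mu_x \delta_t z^n_j)^T L (\mu_t \delta_x z^n_j) = \delta_x\bigl[(\delta_t z^n_j)^T L_+ \mu_t z^n_j\bigr] - \delta_t\bigl[(\delta_x z^n_j)^T L_+ \mu_x z^n_j\bigr].
\]
On the right-hand side of \eqref{eq:LILEP}, the affine-linearity of $\bar{S}$ in its first slot gives the polarisation identity $(b-a)^T \frac{\partial \bar{S}}{\partial x}\big|_{(a,b)} = \bar{S}(a, b, b) - \bar{S}(a, a, b)$, which turns the Kahan discrete gradient into a difference of two values of the trilinear $\bar{S}$.

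Second, this single identity reproduces only one of the three $L_+$-cross-terms in $(\bar{E}_L)^n_j$ and $(\bar{F}_L)^n_j$. The two remaining permutations, those involving $(\delta_x z^n)^T L_+ \mu_x z^{n+1}$, $(\delta_x z^{n+1})^T L_+ \mu_x z^n$, $(\delta_t z^n)^T L_+ \mu_t z^{n+1}$, and $(\delta_t z^{n+1})^T L_+ \mu_t z^n$, come from repeating the above with the scheme at time step $n+1$ and from invoking the symmetry of $\frac{\partial \bar{S}}{\partial x}(y,z)$ in $(y,z)$. Averaging the three identities with weight $\tfrac{1}{3}$, corresponding to the three ``slot'' permutations of the trilinear form $\bar{S}$, makes the $\bar{S}$-differences telescope precisely to $\delta_t \bar{S}(\mu_x z^n_j, \mu_x z^n_j, \mu_x z^{n+1}_j)$, while the $L_+$-pieces assemble into the claimed flux $\delta_x (\bar{F}_L)^n_j$.

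The main obstacle is the mismatch between the two-time-level structure of the Kahan discrete gradient $\frac{\partial \bar{S}}{\partial x}|_{(\mu_x z^n_j, \mu_x z^{n+1}_j)}$ on the right-hand side of \eqref{eq:LILEP} and the three-time-level structure appearing in $\delta_t \bar{S}(\mu_x z^n_j, \mu_x z^n_j, \mu_x z^{n+1}_j)$. This gap is precisely bridged by the weighted symmetric averaging built into $(\bar{E}_L)^n_j$ and $(\bar{F}_L)^n_j$; the bookkeeping of signs, factors of $\tfrac{1}{3}$, and operator commutations, all ultimately relying on the full permutation symmetry of $\bar{S}$, is where the bulk of the work lies.
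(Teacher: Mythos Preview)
Your proposal is correct and follows essentially the same approach as the paper: take inner products of the scheme with $\tfrac{1}{3}\delta_t\mu_x z_j^n$, $\tfrac{1}{3}\delta_t\mu_x z_j^{n+1}$ (still at level $n$), and $\tfrac{1}{3}\delta_t\mu_x z_j^n$ against the scheme at level $n+1$; the $K$-terms cancel by skew-symmetry, the $L$-terms are rewritten via $L=L_+-L_+^T$ and the discrete Leibniz rule, and the $\bar S$-terms telescope to $\delta_t\bar S(\mu_x z_j^n,\mu_x z_j^n,\mu_x z_j^{n+1})$ using the affine-linearity and full symmetry of $\bar S$. Your description of the second and third identities (``repeating with the scheme at time step $n+1$'' plus ``invoking the symmetry of $\partial\bar S/\partial x$'') is slightly imprecise---in fact you need the scheme at level $n$ tested against $\delta_t\mu_x z_j^{n+1}$ \emph{and} the scheme at level $n+1$ tested against $\delta_t\mu_x z_j^{n}$, so that the two off-diagonal $K$-terms cancel one another---but the structural ingredients and the telescoping mechanism you identify are exactly those used in the paper.
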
   
   
\begin{proof}  
Taking the inner product with $\frac {1}{3}\delta_{t}\mu_x z_j^n$ on both sides of \eqref{eq:LILEP} and using the skew-symmetry of matrix $K$, we have
\begin{equation}\label{part1}
\frac {1}{3}(\delta_{t}\mu_x z_j^n)^T L \delta_x \mu_t z_j^n=(\delta_{t} \mu_x z_j^n)^T\frac {\partial\bar{S}}{\partial x}\bigg\rvert_{(\mu_x z_j^n,\mu_x z_j^{n+1})}.
\end{equation}
Taking the inner product with $\frac {1}{3}\delta_{t}\mu_xz_j^{n+1}$ on both sides of \eqref{eq:LILEP}, we get
 \begin{equation}\label{part2}
\frac {1}{3}(\delta_{t}\mu_x z_j^{n+1})^T K\delta_{t}\mu_{x}z_j^n+\frac {1}{3}(\delta_{t}\mu_xz_j^{n+1})^TL\delta_x\mu_{t}z_j^n=(\delta_{t}\mu_xz_j^{n+1})^T\frac {\partial\bar{S}}{\partial x}\bigg\rvert_{(\mu_xz_j^n,\mu_xz_j^{n+1})}.
\end{equation}  
Taking the inner product with $\frac {1}{3}\delta_{t}\mu_x z_j^{n}$ on both sides of the scheme \eqref{eq:LILEP} for the next time step, we get
 \begin{equation}\label{part3}
\frac {1}{3}(\delta_{t}\mu_x z_j^{n})^T K\delta_{t}\mu_{x}z_j^{n+1}+\frac {1}{3}(\delta_{t}\mu_xz_j^{n})^TL\delta_x \mu_{t}z_j^{n+1}=(\delta_{t}\mu_xz_j^{n})^T\frac {\partial\bar{S}}{\partial x}\bigg\rvert_{(\mu_xz_j^{n+1},\mu_xz_j^{n+2})}.
\end{equation}  
Adding equations \eqref{part1}, \eqref{part2} and \eqref{part3} and using the skew-symmetry of matrix $K$, we obtain
 \begin{equation}\label{3part_sum}
 \begin{split}
\frac {1}{3}\Big(&(\delta_{t}\mu_xz_j^{n})^TL\delta_x \mu_{t}z_j^n+(\delta_{t}\mu_x z_j^{n+1})^TL\delta_x \mu_{t}z_j^n+(\delta_{t}\mu_x z_j^{n})^TL\delta_x \mu_{t}z_j^{n+1}\Big)\\
=& \, (\delta_{t}\mu_xz_j^{n})^T\frac {\partial\bar{S}}{\partial x}\mid_{(\mu_x z_j^{n},\mu_x z_j^{n+1})}+(\delta_{t}\mu_x z_j^{n+1})^T\frac {\partial\bar{S}}{\partial x}\bigg\rvert_{(\mu_x z_j^{n},\mu_x z_j^{n+1})} \\
& \, + (\delta_{t}\mu_x z_j^{n})^T\frac {\partial\bar{S}}{\partial x}\bigg\rvert_{(\mu_x z_j^{n+1},\mu_x z_j^{n+2})},\\
=& \, \frac {1}{\Delta t}\big(\bar{S}(\mu_xz_j^{n+1},\mu_xz_j^{n+1},\mu_xz_j^{n+2})-\bar{S}(\mu_xz_j^{n},\mu_xz_j^{n},\mu_xz_j^{n+1})\big),\\
=& \, \delta_t \bar{S}(\mu_x z_j^{n},\mu_x z_j^{n},\mu_x z_j^{n+1}).
\end{split}
\end{equation}  
On the other hand, using the aforementioned commutative laws and discrete Leibniz rules for the operators, we can deduce
\begin{equation} \label{operator cancle}
\begin{split}
 \delta_{t}((\delta_xz_j^n)^TL_+\mu_x z_j^n)&=(\delta_{t}\delta_xz_j^n)^TL_+\mu_{t}\mu_x z_j^n+(\delta_x \mu_t z_j^n)^TL_+\delta_{t}\mu_x z_j^n,\\
\delta_{x}((\delta_tz_j^n)^TL_+\mu_t z_j^n)&=(\delta_{t}\delta_xz_j^n)^TL_+\mu_{t}\mu_xz_j^n+(\delta_t \mu_x z_j^n)^TL_+\delta_{x}\mu_t z_j^n,
\\
\delta_{t}((\delta_xz_j^{n+1})^TL_+\mu_x z_j^n)&=(\delta_{t}\delta_x z_j^{n+1})^TL_+\mu_{t}\mu_xz_j^{n}+(\delta_x\mu_tz_j^{n+1})^TL_+\delta_{t}\mu_xz_j^n,\\
\delta_{x}((\delta_tz_j^{n+1})^TL_+\mu_tz_j^{n})&=(\delta_{t}\delta_xz_j^{n+1})^TL_+\mu_{t}\mu_xz_j^{n}+(\delta_t\mu_xz_j^{n+1})^TL_+\delta_{x}\mu_tz_j^{n},
\\
\delta_{t}((\delta_xz_j^n)^TL_+\mu_xz_j^{n+1})&=(\delta_{t}\delta_x z_j^n)^TL_+\mu_{t}\mu_xz_j^{n+1}+(\delta_x\mu_tz_j^n)^TL_+\delta_{t}\mu_xz_j^{n+1},\\
\delta_{x}((\delta_tz_j^n)^TL_+\mu_tz_j^{n+1})&=(\delta_{t}\delta_xz_j^n)^TL_+\mu_{t}\mu_xz_j^{n+1}+(\delta_t\mu_xz_j^n)^TL_+\delta_{x}\mu_tz_j^{n+1}.
\end{split}
\end{equation}
Using the above relations \eqref{operator cancle}, the fact that $L=L_+-L_+^T$ and the result \eqref{3part_sum}, we obtain
\begin{align*}
\delta_t E_{j }^n+\delta_x F^{n }_j=& \, \delta_t \bar{S}(\mu_xz_j^{n},\mu_xz_j^{n},\mu_xz_j^{n+1})\\
& \, +\frac{1}{3}\big( \delta_{t}((\delta_xz_j^n)^TL_+\mu_xz_j^n)+\delta_{t}((\delta_xz_j^n)^TL_+\mu_xz_j^{n+1})\\
& \, +\delta_{t}((\delta_xz_j^{n+1})^TL_+\mu_xz_j^n)\big) - \frac{1}{3}\big(\delta_{x}((\delta_tz_j^n)^TL_+\mu_tz_j^n)\\
& \, +\delta_{x}((\delta_tz_j^n)^TL_+\mu_tz_j^{n+1})+\delta_{x}((\delta_tz_j^{n+1})^TL_+\mu_tz_j^{n})\big)\\
=& \, \delta_t \bar{S}(\mu_xz_j^{n},\mu_xz_j^{n},\mu_xz_j^{n+1}) - \frac {1}{3}\big((\delta_{t}\mu_xz_j^{n})^TL\delta_x\mu_{t}z_j^n\\
& \, +(\delta_{t}\mu_xz_j^{n+1})^TL\delta_x\mu_{t}z_j^n+(\delta_{t}\mu_xz_j^{n})^TL\delta_x\mu_{t}z_j^{n+1}\big)\\
=& \, 0.
\end{align*}
\end{proof}       

 \begin{corollary}\label{GEC for LECL scheme}
 For periodic boundary conditions $z(x+P,t)=z(x,t)$, the sche\-me \eqref{eq:LILEP} satisfies the discrete global energy conservation law
  \begin{equation}\label{discrete_ECL}
\bar{\mathcal{E}}_L^{n+1} =\bar{\mathcal{E}}_L^{n}, \quad \bar{\mathcal{E}}_L^{n}:=\Delta x\sum_{j=0}^{M-1} (\bar{E}_L)_{j}^n,
  \end{equation}
  where $\Delta x = P/M$ and $(\bar{E}_L)_{j}^n$ is given by \eqref{eq:localenergy}. 
  \end{corollary}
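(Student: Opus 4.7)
The plan is to obtain the global statement as an immediate consequence of Theorem \ref{proof of ECL for LECL scheme} via summation by parts (a discrete analogue of integration over the periodic spatial domain). First, I would start from the local conservation law
\begin{equation*}
\delta_t (\bar{E}_L)_j^n + \delta_x (\bar{F}_L)_j^n = 0,
\end{equation*}
multiply both sides by $\Delta x$, and sum over $j = 0, 1, \ldots, M-1$. Since $\delta_t$ only acts in time, it commutes with the spatial sum, and the first contribution becomes $\delta_t \bar{\mathcal{E}}_L^n$ by the definition \eqref{discrete_ECL}.

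The second contribution telescopes: by the definition of $\delta_x$,
\begin{equation*}
\Delta x \sum_{j=0}^{M-1} \delta_x (\bar{F}_L)_j^n = \sum_{j=0}^{M-1} \left( (\bar{F}_L)_{j+1}^n - (\bar{F}_L)_j^n \right) = (\bar{F}_L)_M^n - (\bar{F}_L)_0^n.
\end{equation*}
The key observation is that this boundary difference vanishes. Inspecting the formula for $(\bar{F}_L)_j^n$ in Theorem \ref{proof of ECL for LECL scheme}, we see that it is constructed purely from $z_j^n$ and $z_j^{n+1}$ via the temporal operators $\delta_t$ and $\mu_t$, with no spatial shift. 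Thus the periodicity $z(x+P,t) = z(x,t)$, together with $\Delta x = P/M$, yields $z_M^n = z_0^n$ and $z_M^{n+1} = z_0^{n+1}$, which forces $(\bar{F}_L)_M^n = (\bar{F}_L)_0^n$.

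Combining these two observations gives $\delta_t \bar{\mathcal{E}}_L^n = 0$, which by the definition of $\delta_t$ is equivalent to $\bar{\mathcal{E}}_L^{n+1} = \bar{\mathcal{E}}_L^n$. There is no real obstacle here; the only point requiring care is verifying that the discrete flux inherits spatial periodicity from $z$, but since $(\bar{F}_L)_j^n$ involves only the node $j$ (shifted in time, not in space), this is immediate.
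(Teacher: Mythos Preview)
Your proof is correct and follows exactly the approach the paper takes: sum the discrete local conservation law \eqref{eq:discreteLECL} over $j$, use telescoping, and invoke periodicity to kill the flux boundary term. One small slip: $(\bar{F}_L)_j^n$ actually depends on $z_j^n$, $z_j^{n+1}$, and $z_j^{n+2}$ (through $\delta_t z_j^{n+1}$ and $\mu_t z_j^{n+1}$), not just the first two, but since your real point is that only the spatial index $j$ appears, the periodicity argument goes through unchanged.
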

  \begin{proof}
  With periodic boundary conditions, we get
  $
  \sum_{j=0}^{M-1} \delta_{x}(\bar{F}_L)_j^{n} = 0,
  $
  and thus \eqref{discrete_ECL} follows from \eqref{eq:discreteLECL}.
  \end{proof}
  
The polarised global energy $\bar{\mathcal{E}}_L^{n}$ may be considered as a function of the solution in time step $n$ only, similarly to the modified Hamiltonian defined in Proposition 3 of \cite{celledoni2012geometric}.
\begin{proposition}\label{prop:modE}
With the solution $z^{n+1}$ found from $z^n$ by \eqref{eq:LILEP}, the discrete global energy $\bar{\mathcal{E}}_L^{n}$ of \eqref{discrete_ECL} satisfies
\begin{equation}
\bar{\mathcal{E}}_L^{n} = \mathcal{E}_L^{n} + \Delta x \sum_{j=0}^{M-1} \frac{1}{3}(\nabla E_L(z_j^n))^T (z_j^{n+1}-z_j^n),
\label{eq:modeneergy}
\end{equation}
where
\begin{align}
\mathcal{E}_L^{n} := \Delta x\sum_{j=0}^{M-1} E_L(z_{j}^n), \quad E_L(z_{j}^n) := S(\mu_x z_j^n) + (\delta_x z_j^n)^T L_+ \mu_x z_j^n,
\label{eq:fullenergy}
\end{align}
while $z_j^{n+1}-z_j^n$ satisfies
\begin{align}\label{eq:newfrm}
R_L(z_j^n)(z_j^{n+1}-z_j^n)=\Delta t g_L(z_j^n),
\end{align}
with $g_L(z_j^n)=\nabla S(\mu_x z_j^n)-L\delta_xz_j^n$ and $R_L(z_j^n)=K\mu_x-\frac{\Delta t}{2}\nabla g_L(z_j^n)$.
\end{proposition}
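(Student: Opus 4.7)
The plan is to verify the two displayed equations in Proposition \ref{prop:modE} separately, since \eqref{eq:modeneergy} is an algebraic identity about the energy densities, while \eqref{eq:newfrm} is a direct rearrangement of the scheme \eqref{eq:LILEP}.

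For \eqref{eq:modeneergy}, I would set $\Delta z_j := z_j^{n+1} - z_j^n$, so that $\mu_x z_j^{n+1} = \mu_x z_j^n + \mu_x \Delta z_j$ and $\delta_x z_j^{n+1} = \delta_x z_j^n + \delta_x \Delta z_j$. The first step is the polarisation identity
\[
\bar{S}(x,x,x+h) - S(x) = \tfrac{1}{3}\,\nabla S(x)^{T} h,
\]
which follows from a short direct calculation using \eqref{energy nonhomogeneous} and \eqref{eq:polarisedH} together with $\nabla S(x) = 3Q(x)x + 2Bx + c$; it is the non-homogeneous analogue of Euler's relation for a cubic. Expanding each $L_+$ cross-term in $(\bar{E}_L)_j^n$ around $z_j^n$ by bilinearity and subtracting $E_L(z_j^n)$ gives
\[
(\bar{E}_L)_j^n - E_L(z_j^n) = \tfrac{1}{3}\bigl[\nabla S(\mu_x z_j^n)^{T} \mu_x \Delta z_j + (\delta_x z_j^n)^{T} L_{+} \mu_x \Delta z_j + (\delta_x \Delta z_j)^{T} L_{+} \mu_x z_j^n\bigr].
\]
Multiplying by $\Delta x$, summing over $j$ with periodic boundary conditions, and applying discrete summation by parts to move $\mu_x$ and $\delta_x$ onto $\Delta z_j$, the resulting coefficient of $\Delta z_j$ coincides with the gradient of the pair of densities $E_L(z_j^n)+E_L(z_{j-1}^n)$ with respect to $z_j^n$; this is the object denoted $\nabla E_L(z_j^n)$, giving \eqref{eq:modeneergy}.

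For \eqref{eq:newfrm}, I would start from \eqref{eq:LILEP} and evaluate the right-hand side using property~3 of Remark \ref{Kahan's scheme with property of the polarized energy}:
\[
3\frac{\partial \bar{S}}{\partial x}\bigg\rvert_{(\mu_x z_j^n,\mu_x z_j^{n+1})} = 3\,Q(\mu_x z_j^n)\mu_x z_j^{n+1} + B\bigl(\mu_x z_j^n + \mu_x z_j^{n+1}\bigr) + c.
\]
Substituting $\mu_x z_j^{n+1} = \mu_x z_j^n + \mu_x \Delta z_j$ and recognising $\nabla S(\mu_x z_j^n) = 3Q(\mu_x z_j^n)\mu_x z_j^n + 2B\mu_x z_j^n + c$ rewrites this as $\nabla S(\mu_x z_j^n) + (3Q(\mu_x z_j^n)+B)\,\mu_x \Delta z_j$. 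On the left, $\mu_t z_j^n = z_j^n + \tfrac{1}{2}\Delta z_j$ splits $L\delta_x \mu_t z_j^n$ into $L\delta_x z_j^n + \tfrac{1}{2} L\delta_x \Delta z_j$, while $K\delta_t \mu_x z_j^n = \tfrac{1}{\Delta t} K\mu_x \Delta z_j$. Collecting terms linear in $\Delta z_j$ on the left and multiplying by $\Delta t$ yields exactly $R_L(z_j^n)\Delta z_j = \Delta t\, g_L(z_j^n)$, with $\nabla g_L(z_j^n)$ acting on $\Delta z_j$ as $\nabla^2 S(\mu_x z_j^n)\mu_x \Delta z_j - L\delta_x \Delta z_j$ where $\nabla^2 S = 6Q + 2B$.

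The main obstacle is the first part: pinning down the correct interpretation of $\nabla E_L(z_j^n)$ as a discrete functional derivative and organising the summation-by-parts bookkeeping so that the three linear-in-$\Delta z_j$ contributions assemble into this single gradient term. By comparison, the reorganisation establishing \eqref{eq:newfrm} is essentially mechanical once Remark \ref{Kahan's scheme with property of the polarized energy} is invoked.
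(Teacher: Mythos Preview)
Your proposal is correct and follows essentially the same route as the paper. For \eqref{eq:modeneergy} you use the polarisation identity $\bar{S}(x,x,x+h)=S(x)+\tfrac{1}{3}\nabla S(x)^{T}h$ and expand the $L_{+}$ cross-terms around $z^{n}$, exactly as the paper does; the only cosmetic difference is that you make the summation-by-parts step explicit when assembling the coefficient of $\Delta z_{j}$ into the discrete variational derivative, whereas the paper absorbs this into the notation $\nabla_{z_{j}^{n}}$ and works ``pointwise''. For \eqref{eq:newfrm} your expansion via $Q$ and $B$ is the concrete version of the paper's one-line observation $3\,\partial\bar{S}/\partial x\rvert_{(\mu_{x}z_{j}^{n},\mu_{x}z_{j}^{n+1})}=\nabla S(\mu_{x}z_{j}^{n})+\tfrac{1}{2}\nabla^{2}S(\mu_{x}z_{j}^{n})\,\mu_{x}\Delta z_{j}$, after which both arguments collect terms identically.
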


\begin{proof}
Note that
\begin{align*}
\bar{S}(\mu_x z_j^n,\mu_x z_j^n,\mu_x z_j^{n+1}) &= S(\mu_x z_j^n) + \frac{1}{3}\nabla S(\mu_x z_j^n)^T (\mu_x z_j^{n+1}-\mu_x z_j^{n})\\
&= S(\mu_x z_j^n) + \frac{1}{3}\nabla_{z_j^n} (S(\mu_x z_j^n))^T (z_j^{n+1}-z_j^{n}),
\end{align*}
and
\begin{equation*}
\begin{split}
&\frac{1}{3}(\delta_{x} z_j^n)^T L_+\mu_x z_j^n +\frac{1}{3}(\delta_{x} z_j^n)^TL_+\mu_x z_j^{n+1} +\frac{1}{3}(\delta_{x}  z_j^{n+1})^TL_+\mu_x z_j^{n}\\
&=(\delta_{x} z_j^{n})^TL_+\mu_x z_j^{n}+\frac{1}{3}\big((\delta_{x} z_j^{n})^TL_+(\mu_x z_j^{n+1}-\mu_x z_j^{n})+(\delta_{x} z_j^{n+1}-\delta_{x} z_j^{n})^TL_+\mu_x z_j^{n}\big)\\
&=(\delta_{x} z_j^{n})^TL_+\mu_x z_j^{n} + \frac{1}{3} \big( (\mu_x z_j^n)^T L_x^T \delta_x + (\delta_x z_j^n)^T L_x \mu_x \big) (z_j^{n+1}-z_j^{n})\\
&=(\delta_{x} z_j^{n})^TL_+\mu_x z_j^{n} + \frac{1}{3} \Big(\nabla_{z_j^n} \big((\delta_{x}z_j^{n})^TL_+\mu_x z_j^{n}\big)\Big)^T (z_j^{n+1}-z_j^{n}).
\end{split}
\end{equation*}
Inserting this in \eqref{eq:localenergy}, we get \eqref{eq:modeneergy} from \eqref{discrete_ECL}. Furthermore, observing that
\begin{equation*}
3\frac {\partial \bar{S}}{\partial x}\bigg\rvert_{(\mu_x z_j^n,\mu_x z_j^{n+1})} = \nabla S(\mu_x z_j^n) + \frac{1}{2} \nabla^2S(\mu_x z_j^n)(\mu_xz_j^{n+1}-\mu_xz_j^n),
\end{equation*}
we may rewrite \eqref{eq:LILEP} as
\begin{equation*}
\Big(K\mu_x+\frac{\Delta t}{2}L\delta_x-\frac{\Delta t}{2}\nabla^2S(\mu_xz_j^n)\mu_x\Big) ( z_j^{n+1}-z_j^{n}) =\Delta t \big(\nabla S(\mu_xz^n_j) - L \delta_x z_j^n\big),
\end{equation*}
which is \eqref{eq:newfrm}.
\end{proof}

Note that \eqref{eq:fullenergy} is the discrete energy preserved by the fully implicit local energy-preserving method of \cite{gong2014some}. Also, for methods based on the multi-symplectic structure, instead of solving for $z$ directly, the normal procedure is to eliminate the auxiliary variables from the scheme and get an equation for one variable $u$. Therefore we do not give an explicit expression for the modified energy in $z^n$. However, in Section \ref{Numerical example}, we present an explicit expression for the modified energy in $u^n$ when our scheme is applied to the KdV equation.
  
The results about the energy conservation for the LILEP method applied to one-dimensional multi-symplectic PDEs can be generalised to problems in spatial dimensions of any finite degree. Consider for example a $2$-dimensional multi-symplectic PDE 
\begin{equation}\label{Multisym_PDEs_2d}
K z_t+L^1z_x+L^2z_y=\nabla S(z), \quad z \in \mathbb{R}^l,\quad (x,y,t)\in \mathbb{R}^3,
      \end{equation}
for which we have the following corollary. This is presented without its proof, which is rather technical but similar to the proof of Theorem \ref{proof of ECL for LECL scheme}.

   \begin{corollary}\label{LEP higher dimension for LILECL scheme}
 The scheme obtained by applying the midpoint rule in space and Kahan's method in time to equation \eqref{Multisym_PDEs_2d},
   \begin{equation}\label{LILECL_Scheme_2d}
K\delta_{t}\mu_{x}\mu_{y}z_{j,k}^n+L^1\delta_x\mu_{t}\mu_yz_{j,k}^n+L^2\delta_y\mu_{t}\mu_xz_{j,k}^n=3\frac {\partial \bar{S}}{\partial x}\bigg\rvert_{(\mu_x\mu_yz_{j,k}^n,\mu_x\mu_yz_{j,k}^{n+1})},
      \end{equation}
 where $j=0,\ldots,M_x-1$ and $k=0,\ldots,M_y-1$, satisfies the discrete local energy conservation law
  \begin{equation*}
\delta_{t} (\bar{E}_L)_{j,k}^n +\delta_{x}{(\bar{F}_L^{1})}_{j,k}^{n}+\delta_{y}{(\bar{F}_L^{2})}_{j,k}^{n}=0,
      \end{equation*}   
      where
  \begin{align*}
  (\bar{E}_L)_{j,k}^n =& \bar{S}(\mu_x\mu_yz_{j,k}^n, \mu_x\mu_yz_{j,k}^n, \mu_x\mu_yz_{j,k}^{n+1})\\
 & +\frac{1}{3}(\delta_{x}\mu_yz_{j,k}^n)^TL^1_+ \mu_x\mu_yz_{j,k}^n +\frac{1}{3}(\delta_{x}\mu_yz_{j,k}^n)^TL^1_+ \mu_x\mu_yz_{j,k}^{n+1}\\
 & + \frac{1}{3}(\delta_{x}\mu_yz_{j,k}^{n+1})^TL^1_+ \mu_x\mu_y z_{j,k}^n + \frac{1}{3}(\delta_{y}\mu_xz_{j,k}^n)^T L^2_+ \mu_x\mu_yz_{j,k}^n \\
 & +\frac{1}{3}(\delta_{y}\mu_xz_{j,k}^n)^T L^2_+ \mu_x\mu_yz_{j,k}^{n+1} + \frac{1}{3}(\delta_{y}\mu_xz_{j,k}^{n+1})^T L^2_+ \mu_x\mu_y z_{j,k}^n,\\
 {(\bar{F}_L^{1})}_{j,k}^{n} =& -\frac{1}{3}(\delta_{t}\mu_yz_{j,k}^n)^TL^1_+ \mu_t \mu_{y} z_{j,k}^n -\frac{1}{3}(\delta_{t}\mu_yz_{j,k}^n)^T L^1_+ \mu_t \mu_{y} z_{j,k}^{n+1} \\
 & -\frac{1}{3}(\delta_{t}\mu_yz_{j,k}^{n+1})^T L^1_+ \mu_{t} \mu_yz_{j,k}^n,\\
 {(\bar{F}_L^{2})}_{j,k}^{n} =& -\frac{1}{3}(\delta_{t}\mu_xz_{j,k}^n)^T L^2_+ \mu_t \mu_{x} z_{j,k}^n-\frac{1}{3}(\delta_{t}\mu_xz_{j,k}^n)^T L^2_+ \mu_t \mu_{x} z_{j,k}^{n+1}\\
 & - \frac{1}{3}(\delta_{t}\mu_xz_{j,k}^{n+1})^T L^2_+ \mu_{t} \mu_xz_{j,k}^n.
      \end{align*}         
  \end{corollary}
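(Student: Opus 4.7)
The plan is to mirror the argument used in Theorem \ref{proof of ECL for LECL scheme} verbatim, treating the two spatial directions symmetrically and combining analogous discrete Leibniz identities for each of $\delta_x\mu_y$ and $\delta_y\mu_x$. First I would take the inner product with $\tfrac{1}{3}\delta_t\mu_x\mu_y z_{j,k}^n$ of the scheme \eqref{LILECL_Scheme_2d} at time step $n$, with $\tfrac{1}{3}\delta_t\mu_x\mu_y z_{j,k}^{n+1}$ of the same scheme at step $n$, and with $\tfrac{1}{3}\delta_t\mu_x\mu_y z_{j,k}^{n}$ of the scheme shifted to step $n{+}1$. Thanks to the skew-symmetry of $K$, the term $(\delta_t\mu_x\mu_y z_{j,k}^n)^T K\,\delta_t\mu_x\mu_y z_{j,k}^n$ vanishes in the first equation, while the remaining two $K$-contributions cancel against each other. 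Summing the three resulting identities, the right-hand side collapses, by exactly the manipulation that produced \eqref{3part_sum}, to $\delta_t \bar{S}(\mu_x\mu_y z_{j,k}^n,\mu_x\mu_y z_{j,k}^n,\mu_x\mu_y z_{j,k}^{n+1})$, which is precisely the $\bar{S}$-part of $\delta_t(\bar{E}_L)_{j,k}^n$.

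Next I would handle the two spatial flux contributions $L^1$ and $L^2$ separately. For $L^1$, I would establish the six discrete identities obtained from \eqref{operator cancle} by the formal substitutions $\mu_x\mapsto \mu_x\mu_y$, $\delta_x\mapsto \delta_x\mu_y$, $L_+\mapsto L^1_+$. The fact that all averaging and difference operators commute pairwise (Property 1 of the operators) makes each substitution immediate and preserves every cancellation. These six identities express the six cross terms appearing in the $L^1_+$-part of $(\bar{E}_L)_{j,k}^n$ and in $(\bar{F}_L^1)_{j,k}^n$ in terms of the common symmetric quantity $(\delta_t\delta_x\mu_y z_{j,k}^\bullet)^T L^1_+(\mu_t\mu_x\mu_y z_{j,k}^\bullet)$, so that $\delta_t$ applied to the $L^1_+$-contribution of $(\bar{E}_L)_{j,k}^n$ together with $\delta_x(\bar{F}_L^1)_{j,k}^n$ reorganises, after using $L^1=L^1_+-(L^1_+)^T$, into exactly $-\tfrac{1}{3}$ times the $L^1$-part of the summed inner-product identity. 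The identical argument with $x$ and $y$ exchanged handles the $L^2$-terms.

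Combining these two reductions, the spatial contributions to $\delta_t(\bar{E}_L)_{j,k}^n+\delta_x(\bar{F}_L^1)_{j,k}^n+\delta_y(\bar{F}_L^2)_{j,k}^n$ are precisely what is needed to cancel the sum of the $L^1$- and $L^2$-terms on the left of the combined inner-product identity, while the $\bar{S}$-part matches on both sides. This yields the claimed local energy conservation law.

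The main obstacle I expect is not conceptual but combinatorial: keeping track of the now significantly larger collection of discrete cross terms and verifying that each summand appearing in the candidate $(\bar{E}_L)_{j,k}^n$, $(\bar{F}_L^1)_{j,k}^n$ and $(\bar{F}_L^2)_{j,k}^n$ is generated exactly once, with the correct sign and coefficient $\tfrac{1}{3}$. Laying out the six Leibniz-type identities for $L^1_+$ and the six for $L^2_+$ in a table, and matching them carefully against the three test multiplications, should make the bookkeeping routine; beyond that, no ideas beyond those already present in the proof of Theorem \ref{proof of ECL for LECL scheme} are needed.
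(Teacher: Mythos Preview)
Your proposal is correct and follows exactly the approach the paper has in mind: the authors explicitly omit the proof, noting it is ``rather technical but similar to the proof of Theorem~\ref{proof of ECL for LECL scheme}'', which is precisely the three-inner-product argument plus paired discrete Leibniz identities that you have outlined, applied independently to the $L^1$- and $L^2$-terms.
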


\subsection {Global energy-preserving methods for multi-symplectic PDEs}
As shown in Section \ref{multi-symplectic PDES properties}, Hamiltonian PDEs of the form \eqref{Multisym_PDEs} with periodic boundary conditions have global energy conservation which can be deduced from the local conservation law. On the other hand, the local conservation law is not inherent in the global conservation law. In this section, we will focus on giving a systematic method that preserves the global energy conservation law directly. We discretize $\partial_x$ with an antisymmetric differential matrix $D$ and get the semi-discretized variant of \eqref{Multisym_PDEs},
\begin{equation}\label{Multisym_PDEs_semidis_any}
K\partial_tz_j+L(D z)_j =\nabla S(z_j), \quad j=0,1,\ldots,M-1,
      \end{equation}
      where $z:=(z_0,z_1,\ldots,z_{M-1})^T \in \mathbb{R}^{M\times l}$ and $(Dz)_j = \sum_{k=0}^{M-1} D_{j,k} z_k$. We then apply Kahan's method to \eqref{Multisym_PDEs_semidis_any} and obtain the linearly implicit global energy-preserving (LIGEP) scheme
 \begin{equation}\label{eq:LIGEP}
 K\delta_t z_j^n+L(D\mu_tz^{n})_j =3\frac {\partial \bar{S}}{\partial x}\bigg\rvert_{(z_j^n,z_j^{n+1})}.
 \end{equation}  
 Define the polarised energy density by
  \begin{equation}
 \bar{E}_j^n = \bar{S}(z_j^n,z_j^{n},z_j^{n+1})+\frac{1}{3}(Dz^n)_j^TL_+ z_j^{n}+\frac{1}{3}(Dz^n)_j^TL_+ z_j^{n+1}+\frac{1}{3}(Dz^{n+1})_j^TL_+z_j^n,
 \label{eq:globalenergy}
 \end{equation} 
 and we get the following result.
\begin{theorem}\label{th:globalenergylaw}
 For periodic boundary conditions $z(x+P,t)=z(x,t)$, the scheme \eqref{eq:LIGEP} satisfies the discrete global energy conservation law
  \begin{equation}
 \bar{\mathcal{E}}^{n+1}= \bar{\mathcal{E}}^{n}, \quad \bar{\mathcal{E}}^{n}:=\Delta x\sum_{j=0}^{M-1} \bar{E}_{j}^n, \quad \Delta x = P/M.
 \label{eq:polglobenergy}
  \end{equation}
  \end{theorem}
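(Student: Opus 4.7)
The plan is to mirror the structure of the proof of Theorem \ref{proof of ECL for LECL scheme}, with the antisymmetry of the discrete derivative matrix $D$ under periodic boundary conditions playing the role that the local discrete Leibniz rules \eqref{operator cancle} play in the local proof. First I would take the inner product of the scheme \eqref{eq:LIGEP} with $\frac{1}{3}\delta_t z_j^n$ and with $\frac{1}{3}\delta_t z_j^{n+1}$, and of the same scheme advanced one time step with $\frac{1}{3}\delta_t z_j^n$---the direct analogues of \eqref{part1}, \eqref{part2}, \eqref{part3}. Adding the three identities and summing over $j = 0, \ldots, M-1$, the $K$ cross-terms cancel pairwise via $K^T = -K$, while the right-hand side telescopes exactly as in \eqref{3part_sum} to $\sum_j \delta_t \bar{S}(z_j^n, z_j^n, z_j^{n+1})$, since that manipulation depends only on the polarisation properties of $\bar{S}$ and is unaffected by the spatial discretisation.

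Next I would invoke discrete summation-by-parts: for any grid functions $u, v$ and any constant matrix $M$, periodic boundary conditions together with $D^T = -D$ give
\begin{equation*}
\sum_j u_j^T M (Dv)_j = -\sum_j (Du)_j^T M v_j.
\end{equation*}
Taking $M = L_+$ in this identity converts the summed left-hand side---namely $\tfrac{1}{3}\sum_j [(\delta_t z^n)_j^T L(D\mu_t z^n)_j + (\delta_t z^{n+1})_j^T L(D\mu_t z^n)_j + (\delta_t z^n)_j^T L(D\mu_t z^{n+1})_j]$---into $-\delta_t$ of the three $L_+$ cross-terms appearing in $\bar{E}_j^n$, summed over $j$. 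Combined with the $\bar{S}$ contribution, this yields $\delta_t \sum_j \bar{E}_j^n = 0$, and multiplying by $\Delta x$ gives \eqref{eq:polglobenergy}.

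The main obstacle is the algebra in the second step. Individual flux terms do not reduce to individual energy cross-terms one-for-one; rather, all three contributions must be expanded via the sum-by-parts identity and collected, using the derived symmetry $\sum_j u_j^T L (Dv)_j = \sum_j v_j^T L (Du)_j$ (a consequence of $D$ antisymmetric and $L$ skew) to cancel spurious mixed-time contributions involving $z^{n+2}$. Careful sign tracking through the $L = L_+ - L_+^T$ splitting is essential to ensure that the resulting expression precisely matches $-\delta_t$ of the polarised $L_+$ energy terms in $\bar{E}_j^n$, so that everything conspires to give vanishing discrete time derivative of the total polarised energy.
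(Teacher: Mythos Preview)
Your proposal is correct and follows the same overall strategy as the paper: form the three inner products, add them so that the $K$ terms cancel and the right-hand sides telescope to $\delta_t\bar S(z_j^n,z_j^n,z_j^{n+1})$, then exploit the antisymmetry of $D$ together with the splitting $L=L_+-L_+^T$ to match the flux terms against the $L_+$ part of $\delta_t\bar E_j^n$. The paper organises the last step slightly differently: instead of summing over $j$ immediately and invoking summation-by-parts, it first derives the pointwise identity $\delta_t\bar E_j^n=\sum_k D_{j,k}\,G_{j,k}$ with an explicit $G_{j,k}$ symmetric in $j,k$, and only then sums; this gives a marginally more informative intermediate statement but is equivalent to your route. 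One small remark: your concern about ``spurious'' $z^{n+2}$ contributions is misplaced---both $\delta_t\bar E_j^n$ and the summed flux naturally contain $z^{n+2}$ (through $\delta_t z^{n+1}$ and $\mu_t z^{n+1}$), and once you expand via the discrete Leibniz rule and summation-by-parts these match directly, with no separate cancellation step and no need for the derived identity $\sum_j u_j^T L(Dv)_j=\sum_j v_j^T L(Du)_j$.
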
   
 \begin{proof}
Taking the inner product with $\frac{1}{3}\delta_tz_j^n$ on both sides of equation \eqref{eq:LIGEP} and using the skew-symmetry of the matrix $K$, we get
    \begin{align}\label{inner product1}
\frac{1}{3} (\delta_tz_j^n)^TL(D\mu_tz^{n})_j= (\delta_tz_j^n)^T\frac {\partial \bar{S}}{\partial x}\bigg\rvert_{(z_j^n,z_j^{n+1})}.
      \end{align}
      Taking the inner product with $\frac{1}{3}\delta_tz_j^{n+1}$ on both sides of \eqref{eq:LIGEP}, we get
    \begin{align}\label{inner product2}
\frac{1}{3} (\delta_tz_j^{n+1})^T K \delta_tz_j^{n} + \frac{1}{3} (\delta_tz_j^{n+1})^TL(D\mu_tz^{n})_j= (\delta_tz_j^{n+1})^T\frac {\partial \bar{S}}{\partial x}\bigg\rvert_{(z_j^n,z_j^{n+1})}.
      \end{align} 
    Furthermore, taking the inner product with $\frac{1}{3}\delta_tz_j^n$ on both sides of \eqref{eq:LIGEP} for the next time step, we have
     \begin{align}\label{inner product3}
\frac{1}{3} (\delta_tz_j^{n})^T K \delta_tz_j^{n+1} + \frac{1}{3} (\delta_tz_j^n)^T L (D \mu_tz^{n+1})_j= (\delta_tz_j^n)^T\frac {\partial \bar{S}}{\partial x}\bigg\rvert_{(z_j^{n+1},z_j^{n+2})}.
      \end{align}
Adding equations \eqref{inner product1}, \eqref{inner product2} and \eqref{inner product3}, we get 
\begin{equation}\label{inner product}
\begin{split}
\frac{1}{3} \big((\delta_tz_j^n)^T L (D\mu_tz^{n})_j+(\delta_tz_j^{n})^T L (D\mu_tz^{n+1})_j&\\
+ (\delta_tz_j^{n+1})^T L (D\mu_tz^{n})_j\big)& =\delta_t \bar{S}(z_j^n,z_j^n,z_j^{n+1}).
\end{split}
\end{equation}
  By using the commutative laws and discrete Leibniz rules,
   \begin{equation}\label{using discrete Leibnize rule}
   		\begin{split}
   		\delta_t((Dz^{n})_j^TL_+z_j^n)&=(D\delta_tz^{n})_j^T  L_+\mu_tz_j^n+(D\mu_tz^{n})_jL_+ \delta_t z_j^n,\\
   		\delta_t((Dz^{n})_j^TL_+z_j^{n+1})&=(D\delta_tz^{n})_j^T  L_+\mu_tz_j^{n+1}+(D\mu_tz^{n})_jL_+ \delta_t z_j^{n+1},\\
   		\delta_t((Dz^{n+1})_j^TL_+z_j^n)&=(D\delta_tz^{n+1})_j^T  L_+\mu_tz_j^n+(D\mu_tz^{n+1})_jL_+ \delta_t z_j^n.
        \end{split} 
   \end{equation} 
  Based on the above equations \eqref{inner product} and \eqref{using discrete Leibnize rule}, we obtain
\begin{equation*}
     \begin{split}
   \delta_t E_j^n =& \,  \delta_t \bar{S}(z_j^n,z_j^n,z_j^{n+1})+ \frac{1}{3}\big(\delta_t((Dz^{n})_j^TL_+z_j^n)+(Dz^{n})_j^TL_+z_j^{n+1}+(Dz^{n+1})_j^TL_+z_j^n\big)\\
   = & \, \frac{1}{3}\big((\delta_tz_j^n)^TL_+(D\mu_tz^{n})_j+(D \delta_tz^{n})_j^TL_+\mu_tz_j^n\big)\\
   & \, +\frac{1}{3}\big((\delta_tz_j^{n+1})^TL_+(D\mu_tz^{n})_j+(D \delta_tz^{n+1})_j^TL_+\mu_tz_j^n\big)\\
   & \, +\frac{1}{3}\big((\delta_tz_j^{n})^TL_+(D\mu_tz^{n+1})_j+(D \delta_tz^{n})_j^TL_+\mu_tz_j^{n+1}\big)\\
   = & \,\sum_{k=0}^{N-1}(D)_{j,k}G_{j,k},
    \end{split}
\end{equation*}
where 
 \begin{align*}
G_{j,k}:=&\frac{1}{3}\big((\delta_tz^n)_j^TL_+\mu_tz_L^n+(\delta_tz^n)_L^TL_+\mu_tz_j^n\big)\\
&+\frac{1}{3}\big((\delta_tz^{n+1})_j^TL_+\mu_tz_L^n+(\delta_tz^{n+1})_L^TL_+\mu_tz_j^n\big)\\
&+\frac{1}{3}\big((\delta_tz^n)_j^TL_+\mu_tz_L^{n+1}+(\delta_tz^n)_L^TL_+\mu_tz_j^{n+1}\big).
\end{align*}
Since $D$ is skew-symmetric and $G_{j,k}=G_{k,j}$, we get 
$$\sum_{j=0}^{M-1}\delta_t \bar{E}_j^n=0,$$
which implies that the discrete global energy conservation law
 $\bar{\mathcal{E}}^{n+1}= \bar{\mathcal{E}}^{n}$ is satisfied.
  \end{proof} 
The polarised energy $\bar{\mathcal{E}}$ preserved by \eqref{eq:LIGEP} may also be expressed as a modification of the discrete energy 
\begin{align}
\mathcal{E}^{n} := \Delta x\sum_{j=0}^{M-1} E(z_{j}^n), \quad E(z_{j}^n) = S(z_j^n) + (Dz^n)_j^TL_+ z_j^{n},
\label{fullglobalenergy}
\end{align}
which is preserved by the fully implicit global energy-preserving scheme of \cite{gong2014some}. The proof of the following proposition is similar to the proof of Proposition \ref{prop:modE}, and hence omitted.

\begin{proposition}\label{prop:modEfull}
If the solution $z^{n+1}$ is found from $z^n$ by \eqref{eq:LIGEP}, 
the discrete global energy $\bar{\mathcal{E}}^{n}$ of \eqref{eq:polglobenergy}  satisfies
\begin{equation*}
\bar{\mathcal{E}}^{n} = \mathcal{E}^{n} + \Delta x \sum_{j=0}^{M-1} \frac{1}{3}(\nabla E(z_j^n))^T (z_j^{n+1}-z_j^n),
\end{equation*}
and $z_j^{n+1}-z_j^n$ satisfies
\begin{align*}\label{eq:newfrmglobal}
R(z_j^n)(z_j^{n+1}-z_j^n)=\Delta t g(z_j^n),
\end{align*}
where $g(z_j^n)=\nabla S(z_j^n)-L(Dz)_j^n$ and $R(z_j^n)=K+\frac{\Delta t}{2}\nabla g(z_j^n)$.
\end{proposition}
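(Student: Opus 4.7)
The plan is to mirror the proof of Proposition \ref{prop:modE}, the one genuinely new ingredient being a summation-by-parts step needed because the spatial discretisation $D$ is now a non-local operator (whereas $\delta_x,\mu_x$ in the local case only couple nearest neighbours). I would first reduce the pointwise density $\bar E_j^n$ in \eqref{eq:globalenergy} to the fully implicit density $E(z_j^n)$ in \eqref{fullglobalenergy} plus an $O(w)$ remainder (with $w:=z^{n+1}-z^n$), and afterwards rewrite the scheme \eqref{eq:LIGEP} to isolate $w_j$.

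For the cubic part I would use the polarisation identity $\bar S(y,y,z)=S(y)+\tfrac13\nabla S(y)^T(z-y)$, which is a direct consequence of the cubic structure of $S$ together with \eqref{eq:polarisedH}; applied with $y=z_j^n$ and $z=z_j^{n+1}$ this handles the $\bar S$-term. For the three bilinear $L_+$-terms, writing $(Dz^{n+1})_j=(Dz^n)_j+(Dw)_j$ and rearranging gives
\begin{equation*}
\bar E_j^n = E(z_j^n) + \tfrac13\nabla S(z_j^n)^T w_j + \tfrac13(Dz^n)_j^T L_+ w_j + \tfrac13(Dw)_j^T L_+ z_j^n.
\end{equation*}

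The step that departs from the local case is the summation over $j$: the last two correction terms above cannot individually be written pointwise as a gradient of a local quantity, but they combine cleanly after summation. Using $D^T=-D$,
\begin{equation*}
\sum_{j=0}^{M-1}(Dw)_j^T L_+ z_j^n = -\sum_{j=0}^{M-1}(Dz^n)_j^T L_+^T w_j,
\end{equation*}
so the two corrections collapse to $\sum_j(Dz^n)_j^T(L_+-L_+^T)w_j=-\sum_j(L(Dz^n)_j)^T w_j$. Because $D_{j,j}=0$, the partial derivative of $\mathcal{E}^n/\Delta x$ with respect to $z_j^n$ equals $\nabla S(z_j^n)-L(Dz^n)_j$, which is exactly $\nabla E(z_j^n)$; combining this with the $\nabla S$ contribution above delivers the first assertion.

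For the second assertion I would copy the argument at the end of Proposition \ref{prop:modE}: from \eqref{eq:polarisedH} one has
\begin{equation*}
3\frac{\partial\bar S}{\partial x}\bigg\rvert_{(z_j^n,z_j^{n+1})} = \nabla S(z_j^n) + \tfrac12\nabla^2 S(z_j^n)\,w_j,
\end{equation*}
which, substituted into \eqref{eq:LIGEP} together with $\mu_t z^n = z^n + \tfrac12 w$, rearranges to $R(z_j^n)\,w_j=\Delta t\,g(z_j^n)$ with the $R$ and $g$ from the statement (treating $D$ as a linear operator when forming $\nabla g(z_j^n)=\nabla^2 S(z_j^n)-LD$). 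The only genuine obstacle is the summation-by-parts manipulation; it is what forces the identification of $\nabla E(z_j^n)$ with a full rather than strictly local partial derivative, and everything else is a direct adaptation of the local proof.
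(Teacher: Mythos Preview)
Your argument is correct and is exactly the adaptation of the proof of Proposition \ref{prop:modE} that the paper has in mind; the paper in fact omits this proof entirely, simply pointing back to Proposition \ref{prop:modE}. The summation-by-parts step via $D^T=-D$ that you flag is indeed the one additional manoeuvre needed to collapse the $L_+$-terms and identify $\nabla E(z_j^n)$ with $\nabla S(z_j^n)-L(Dz^n)_j$.
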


The above global conservation results can be generalised to multi-symplectic formulations in higher spatial dimensions, as demonstrated for the two-dimensional case by the following corollary, whose omitted proof is in the same vein as the proof of Theorem \ref{th:globalenergylaw}.
 
\begin{corollary}\label{GEP higher dimension for LIGECL scheme}
 Discretizing $\partial_x$ and $\partial_y$ by skew-symmetric differential matrices $D_x$ and $D_y$ in equation \eqref{Multisym_PDEs_2d} and then applying Kahan's method to the semi-discrete system, one obtains the linearly implicit global energy-preserving (LIGEP) scheme
   \begin{equation}\label{LIGECL_Scheme_2d}
K\delta_{t}z_{j,k}^n+L^1\mu_{t}(D_xz^{n})_{j,k}+L^2\mu_{t}(D_yz^{n})_{j,k}=3\frac {\partial \bar{S}}{\partial x}\bigg\rvert_{(z_{j,k}^n,z_{j,k}^{n+1})},
      \end{equation}
where $j=0,\ldots,M_x-1$ and $k=0,\ldots,M_y-1$.
For periodic boundary conditions $z(x+P_x,y,t)=z(x,y,t)$, $z(x,y+P_y,t)=z(x,y,t)$, the scheme \eqref{LIGECL_Scheme_2d} satisfies the discrete global energy conservation law
  \begin{equation*}
 \bar{\mathcal{E}}^{n+1}= \bar{\mathcal{E}}^{n},
      \end{equation*}     
      where
  \begin{align*}
  \bar{\mathcal{E}}^{n}:= & \,\Delta x \, \Delta y \sum_{j=0}^{M_x-1} \sum_{k=0}^{M_y-1} \bar{E}_{j,k}^n, \quad \Delta x = P_x/M_x, \quad \Delta y = P_y/M_y, \\
 \bar{E}_{j,k}^n = & \, \bar{S}(z_{j,k}^n,z_{j,k}^{n},z_{j,k}^{n+1})\\
 & \, +\frac{1}{3}(D_xz^{n})_{j,k}^TL^1_+ z_{j,k}^{n}+\frac{1}{3}(D_xz^{n})_{j,k}^TL^1_+ z_{j,k}^{n+1}+\frac{1}{3}(D_xz^{n+1})_{j,k}^TL^1_+z_{j,k}^n,\\
& \, +\frac{1}{3}(D_yz^{n})_{j,k}^TL^2_+ z_{j,k}^{n}+\frac{1}{3}(D_yz^{n})_{j,k}^TL^2_+ z_{j,k}^{n+1}+\frac{1}{3}(D_yz^{n+1})_{j,k}^TL^2_+z_{j,k}^n.
 \end{align*} 
\end{corollary}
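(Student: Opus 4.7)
The plan is to run the proof of Theorem \ref{th:globalenergylaw} essentially verbatim, with the single spatial operator $LD$ replaced by the sum $L^1 D_x + L^2 D_y$. Since both directions enter the scheme linearly and the two differentiation matrices act on disjoint indices in exactly the same antisymmetric way, the $x$- and $y$-contributions to the energy flux can be handled independently by the same telescoping argument.

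First I would take three inner products of \eqref{LIGECL_Scheme_2d}: with $\tfrac13\delta_t z_{j,k}^n$, with $\tfrac13\delta_t z_{j,k}^{n+1}$, and with $\tfrac13\delta_t z_{j,k}^n$ applied to the scheme at the next time step. Adding the three, the terms involving $K$ cancel by skew-symmetry, just as in \eqref{inner product1}–\eqref{inner product}. The right-hand side telescopes, via the definition of $\bar S$ and property (3) of Remark \ref{Kahan's scheme with property of the polarized energy}, into $\delta_t \bar S(z_{j,k}^n,z_{j,k}^n,z_{j,k}^{n+1})$. On the left I am left with two groups of three terms, one of the form
\begin{equation*}
\tfrac{1}{3}\bigl((\delta_t z_{j,k}^n)^T L^1(D_x\mu_t z^n)_{j,k}+(\delta_t z_{j,k}^n)^T L^1(D_x\mu_t z^{n+1})_{j,k}+(\delta_t z_{j,k}^{n+1})^T L^1(D_x\mu_t z^n)_{j,k}\bigr),
\end{equation*}
and the analogous group with $L^2, D_y$ in place of $L^1, D_x$.

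Next I would expand $\delta_t \bar E_{j,k}^n$ using the same commutative/Leibniz identities as in \eqref{using discrete Leibnize rule}, applied separately to the three $L^1_+$ cross-terms and to the three $L^2_+$ cross-terms. Combining this expansion with the telescoped identity above and using $L^\alpha = L^\alpha_+ - (L^\alpha_+)^T$, a direct computation yields
\begin{equation*}
\delta_t \bar E_{j,k}^n = \sum_{\ell=0}^{M_x-1}(D_x)_{j,\ell}\,G^1_{(j,k),(\ell,k)} + \sum_{m=0}^{M_y-1}(D_y)_{k,m}\,G^2_{(j,k),(j,m)},
\end{equation*}
where $G^1$ and $G^2$ are expressions built from the same three-term symmetric combinations as the $G_{j,k}$ in the proof of Theorem \ref{th:globalenergylaw}, and crucially satisfy $G^1_{(j,k),(\ell,k)}=G^1_{(\ell,k),(j,k)}$ and $G^2_{(j,k),(j,m)}=G^2_{(j,m),(j,k)}$.

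Finally, summing over $j$ and $k$, the skew-symmetry of $D_x$ paired with the $(j,\ell)$-symmetry of $G^1$ makes the first double sum vanish, and likewise for the $y$ contribution via $D_y$ and $G^2$; periodic boundary conditions guarantee that no boundary terms appear. Hence $\sum_{j,k}\delta_t \bar E_{j,k}^n = 0$, which gives $\bar{\mathcal E}^{n+1}=\bar{\mathcal E}^n$. The only real obstacle is bookkeeping: the number of cross-terms doubles relative to the 1D case and one must keep the $L^1$- and $L^2$-blocks separate throughout the expansion of $\delta_t\bar E_{j,k}^n$ so that the correct three-term symmetry in each direction is preserved when the skew-symmetry of $D_x$, respectively $D_y$, is invoked.
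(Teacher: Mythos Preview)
Your proposal is correct and matches the paper's approach: the paper omits the proof of this corollary entirely, stating only that it ``is in the same vein as the proof of Theorem~\ref{th:globalenergylaw}.'' Your outline---replacing $LD$ by $L^1D_x+L^2D_y$, running the three inner products and the Leibniz expansion separately in each direction, and then using the skew-symmetry of each $D_\alpha$ against the symmetry of the corresponding $G^\alpha$---is exactly the intended generalisation.
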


\section{Numerical examples}\label{Numerical example}
In this section, we apply our proposed new linearly implicit energy-preserving schemes to the KdV equation and Zakharov--Kuznetsov equation, and compare them with fully implicit schemes. Among our reference methods are the methods introduced in  \cite{gong2014some}, for which the local energy-preserving method is denoted by LEP, and the global energy-preserving method by GEP. These schemes are discretized in space the same way as our LILEP and LIGEP schemes, but the fully implicit AVF method is used for the time-stepping. For the GEP and LIGEP schemes, two different choices are considered for approximating the spatial derivative: the central difference operator $\delta^c_x$ defined by $\delta^c_x v_j^n := \frac{1}{2}(\delta_x v_{j-1}^n+\delta_x v_j^n)$ and the first order Fourier pseudospectral operator \cite{bridges2001multi}. The latter results in the $M \times M$ matrix $D$, given explicitly by its elements
\begin{equation*}
D_{i,j} = 
\begin{cases}
    \frac{\pi}{P} (-1)^{i+j} \cot{\left(\pi (i-j)/M\right)},     & \quad \text{if} \quad i \neq j,\\
    0,     & \quad \text{if} \quad i = j,
\end{cases}
\end{equation*}
evaluated on the domain $\left[0,P\right]$, where we assume $M$ even and periodic boundary conditions \cite{chen2011multi}. If $M$ is odd, we have instead
\begin{equation*}
D_{i,j} = 
\begin{cases}
    \frac{\pi}{P} (-1)^{i+j} \cot{\left(\pi (i-j)/M\right)},     & \quad \text{if} \quad \lvert i-j\rvert < M/2,\\
    \frac{\pi}{P} (-1)^{i+j} \cot{\left(\pi (j-i)/M\right)},     & \quad \text{if} \quad \lvert i-j\rvert > M/2,\\
    0,     & \quad \text{if} \quad i = j.
\end{cases}
\end{equation*}

The numerical results presented in this section are obtained from schemes implemented in MATLAB (2018b release), running on an early 2015 MacBook Pro with a dual-core 3.1 GHz Intel Core i7 processor and 16 GB of 1867 MHz DDR3 RAM. All fully implicit schemes are solved at each step by Newton's method until $\lVert F(u^{n}) \rVert_2 < 10^{-10}$. Linear systems are solved using the backslash operator of MATLAB. MATLAB and Python codes for the experiments are available at \url{https://doi.org/10.5281/zenodo.3709463}.

\subsection{Korteweg--de Vries equation}\label{Numerical example_KdV}
Consider the multi-symplectic structure of the KdV equation as presented in Example \ref{ex:KdV}.
Applying the LILEP scheme \eqref{eq:LILEP} to \eqref{Kdv_multisym_element}, we obtain
\begin{equation*}\label{Kdv_multisym_element_LILEP}
\begin{split}
\frac{1}{2}\delta_t\mu_xu_j^n+\delta_x\mu_tw_j^n &=0,\\
-\frac{1}{2}\delta_t\mu_x\phi_j^n-\gamma \delta_x\mu_tv_j^n &=-\mu_t\mu_xw_j^n+\frac{\eta}{2}\mu_xu_j^n\mu_xu_j^{n+1},\\
\gamma \delta_x\mu_tu_j^n &=\mu_t\mu_xv_j^n,\\
\delta_x\mu_t\phi_j^n &=\mu_t\mu_xu_j^n.
\end{split}
\end{equation*}
By eliminating the auxiliary varibles $\phi, v$ and $w$, we see that this is equivalent to
\begin{equation*}
\delta_t\mu_t\mu_x^3u_j^n+\frac{\eta}{2}\delta_x\mu_t\mu_x(\mu_xu_j^n\mu_xu_j^{n+1})+\gamma^2\delta_x^3\mu_t^2u_j^n=0.
\end{equation*}
Omitting the average operator $\mu_t$ gives us
\begin{equation}\label{eq_KdV_pointwise_IMLECscheme_2 time level}
\delta_t\mu_x^3u_j^n+\frac{\eta}{2}\delta_x\mu_x(\mu_xu_j^n\mu_xu_j^{n+1})+\gamma^2\delta_x^3\mu_tu_j^n=0.
\end{equation}
The polarised discrete energy preserved by this scheme is
\begin{equation} \label{discrete global energy from local energy kdv}
\bar{\mathcal{E}}_L^n=\Delta x\sum_{j=0}^{M-1}\Big(-\frac {1}{6}\gamma^2\big((\delta_x u_j^n)^2+2\delta_x u_j^n\delta_x u_j^{n+1}\big)+\frac {1}{6}\eta\big(\mu_x u_j^n)^2\mu_x u_j^{n+1} \Big).
\end{equation}
On the other hand, the discrete energy preserved by the LEP method of \cite{gong2014some} is
\begin{equation} \label{discrete global energy from local energy kdv_LECpaper}
\mathcal{E}_L^n=\Delta x\sum_{j=0}^{M-1}\Big(-\frac {1}{2}\gamma^2(\delta_x u_j^n)^2+\frac {1}{6}\eta(\mu_x u_j^n)^3 \Big).
\end{equation}
By Proposition \ref{prop:modE} and elimination of the variables $\phi, v$ and $w$, \eqref{discrete global energy from local energy kdv} can be expressed as a modification of \eqref{discrete global energy from local energy kdv_LECpaper}:
we may rewrite \eqref{eq_KdV_pointwise_IMLECscheme_2 time level} as
\begin{equation*}
u_j^{n+1} - u_j^n = - \Delta t \, \big(\mu_x^3 + \frac{\Delta t}{2}\gamma^2\delta_x^3 + \frac{\Delta t}{2}\eta \delta_x \mu_x \text{diag}{(\mu_x u_n)}\mu_x\big)^{-1}\big(\gamma^2 \delta_x^3 u^n + \frac{\eta}{2} \delta_x \mu_x (\mu_x u^n)^2\big),
\end{equation*}
where $(\mu_x u^n)^2$ denotes the element-wise square of $\mu_x u^n$.
Inserting this in \eqref{discrete global energy from local energy kdv}, we get 
\begin{align*}
\bar{\mathcal{E}}_L^n = & \, \mathcal{E}_L^n - \frac{\Delta t \, \Delta x}{3} \big(-\gamma^2 \delta_x^T \delta_x u^n + \frac{\eta}{2} \mu_x^T (\mu_x u^n)^2\big)^T \\
& \, \big( \mu_x^3 + \frac{\Delta t}{2} \gamma^2 \delta_x^3 + \frac{\Delta t}{2} \eta \delta_x \mu_x \text{diag}(\mu_x{u^n}) \mu_x\big)^{-1} \big(\gamma^2 \delta_x^3 u^n + \frac{\eta}{2}\delta_x \mu_x (\mu_x u^n)^2\big)\\
= & \, \mathcal{E}_L^n + \frac{\Delta t}{3} (\nabla \mathcal{E}_L^n)^T \big( \mu_x^3 - \frac{\Delta t}{2} \zeta_L'(u^n)\big)^{-1} \zeta_L(u^n),
\end{align*}
with
\begin{equation*}
\zeta_L(u^n) =  - \gamma^2 \delta_x^3 u^n -\frac{\eta}{2} \delta_x \mu_x (\mu_x u^n)^2,
\end{equation*}
where $\nabla \mathcal{E}_L^n$ means the gradient of $\mathcal{E}_L^n$ with respect to $u^n$, and $\zeta_L'(u^n)$ denotes the Jacobian matrix of $\zeta_L (u^n)$.

Similarly for the LIGEP method \eqref{eq:LIGEP}; applying it to the the multi-symplectic KdV equations \eqref{Kdv_multisym_element} and eliminating the auxiliary varibles $\phi, v$ and $w$, we obtain 
\begin{equation*}
\delta_t \mu_t u_j^n+\frac{\eta}{2}\mu_t(D(u^nu^{n+1}))_j+\gamma^2\mu_t^2(D^3u^n)_j=0,
\end{equation*}
where $u^nu^{n+1}$ denotes element-wise multiplication of the vectors.
Omitting the average operator $\mu_t$, we get
\begin{equation}\label{eq_KdV_pointwise_IMGECscheme_2 time level}
\delta_tu_j^n+\frac{\eta}{2}(D(u^nu^{n+1}))_j+\gamma^2\mu_t(D^3u^n)_j=0.
\end{equation}
The discrete global energy preserved by the GEP method is
\begin{equation} \label{discrete global energy from global energy kdv_LECpaper}
\mathcal{E}^n=\Delta x\sum_{j=0}^{M-1}\Big(-\frac {1}{2}\gamma^2(Du^n)_j^2+\frac {1}{6}\eta(u_j^n)^3 \Big),
\end{equation}
while the polarised discrete energy preserved by \eqref{eq_KdV_pointwise_IMGECscheme_2 time level} is
\begin{equation}\label{discrete global energy from global energy kdv}
\begin{split}
\bar{\mathcal{E}}^n = & \, \Delta x\sum_{j=0}^{M-1}\Big(-\frac {1}{6}\gamma^2\big((Du^n)_j^2+2(Du^n)_j(D u^{n+1})_j\big)+\frac {1}{6}\eta\big(u_j^n)^2 u_j^{n+1} \Big) \\
= & \, \mathcal{E}^n - \frac{\Delta t \, \Delta x}{3} \big(-\gamma^2 D^T D u^n + \frac{\eta}{2} (u^n)^2\big)^T \\
 & \, \big( I + \frac{\Delta t}{2} \gamma^2 D^3 + \frac{\Delta t}{2} \eta D \, \text{diag}(u^n)\big)^{-1} \big(\gamma^2 D^3 u^n + \frac{\eta}{2}D (u^n)^2\big)\\
=  & \, \mathcal{E}^n + \frac{\Delta t}{3} (\nabla\mathcal{E}^n)^T \big( I - \frac{\Delta t}{2} \zeta'(u^n)\big)^{-1} \zeta(u^n),
\end{split}
\end{equation}
where $\zeta(u^n) = -\gamma^2 D^3 u^n - \frac{\eta}{2}D (u^n)^2$.

\subsubsection*{Test problem 1}
In the first numerical experiment, we consider the problem introduced in \cite{zabusky1965interaction} and then used by Zhao and Qin \cite{zhao2000multisymplectic} and Ascher and McLachlan \cite{ascher2005symplectic} to test various symplectic and multi-symplectic schemes: the KdV equation with $\gamma=0.022$, $\eta=1$, and initial value
$$u_0(x)=\cos(\pi x),$$
with $x\in[0,P]$, $P=2$. 
This problem is also considered in Example 3 of \cite{gong2014some}, where it is solved by implicit schemes that preserve local and/or global energy.
As observed by Gong et al.\, the global energy-preserving scheme (GEP) with the central difference operator used to approximate $\partial_x$ gives unsatisfactory results for this problem; we observed that the same is true for the LIGEP scheme. Therefore, the Fourier pseudospectral operator is used to approximate the spatial derivatives in the GEP and LIGEP schemes. This seems to result in more accurate solutions than the LEP and LILEP schemes for the same number of discretization points, but at a considerably higher computational cost, as seen from Table \ref{tab:timecost_cospix}. 
Also from Table \ref{tab:timecost_cospix}, we see an example of the advantage that can be gained by having a linearly implicit scheme instead of a fully implicit scheme. The different running times give an indication of the number of iterations necessary in Newton's method to solve the fully implicit schemes for the particular cases.
From Figure \ref{KdV_cospix}, we can conclude that our linearly implicit schemes give results close to their fully implicit counterparts introduced in \cite{gong2014some}, and that the different schemes converge to the same solution.

\begin{table}[ht]
{
\caption{Computational time, in seconds, for finding the solution of the first test problem at time $t=5$ by a temporal step size $\Delta t=0.005$ and various number of discretization points in space, $M$.}
\label{tab:timecost_cospix}
\begin{center}
\begin{tabular}{lccccccc}
\hline
\multicolumn{1}{c}{$M$} & $200$ & $400$ & $600$ & $800$ & $1000$ & $1500$ & $2000$\\
\hline
LEP      	& 1.87 & 3.16 & 4.43 				& 11.18 & 13.81 & 21.53 & 28.54     \\
LILEP    	& 4.24e-1 & 7.40e-1 & 1.07    	& 1.39  & 1.73 & 2.67 & 3.58  \\
GEP 			& 12.29 & 78.11 & 242.48   		& 1016.57 & 1888.69 & 5793.18 & 13154.20     \\
LIGEP 		& 2.16 & 11.15 & 33.50     		& 73.94  & 136.93  & 398.53  & 894.52   \\
\hline
\end{tabular}
\end{center}
}
\end{table}

\begin{figure}[ht]
		\centering
	  \begin{minipage}{.47\textwidth}
		\centering
                \includegraphics[width=0.99\linewidth]{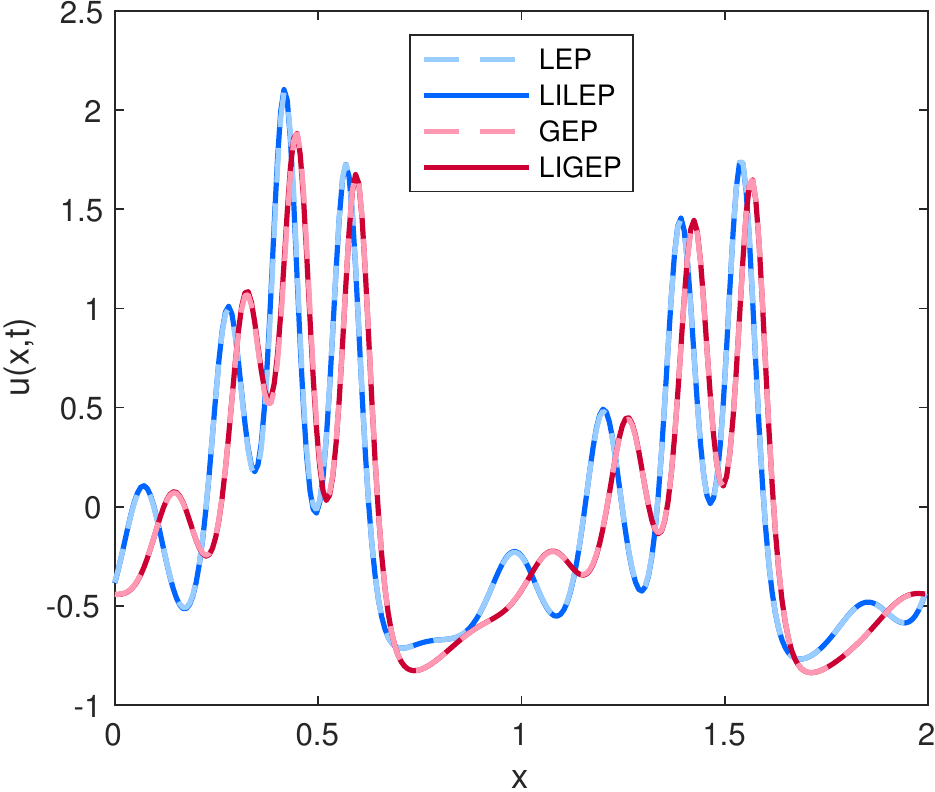}
        \end{minipage}\hspace{14pt}
          \begin{minipage}{.47\textwidth}
        \centering
                \includegraphics[width=0.99\linewidth]{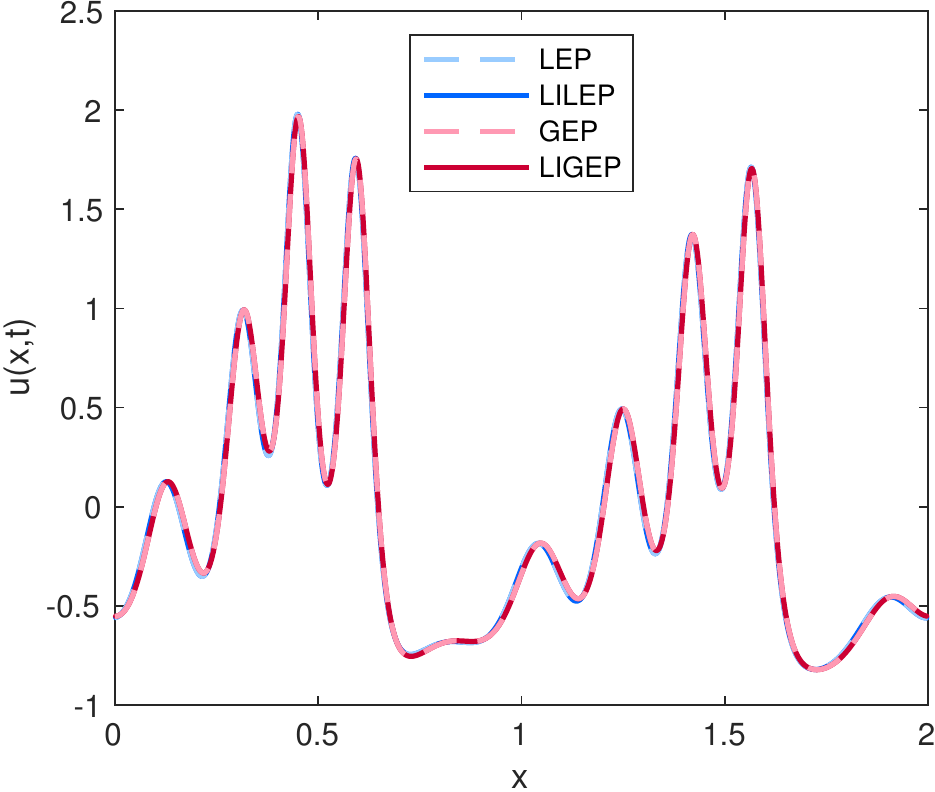}
        \end{minipage}
      \caption{Solution of test problem 1 at time $t=5$ by our schemes and the fully implicit schemes of Gong et al. \textit{Left:} $M=250$, $\Delta t = 0.02$.  \textit{Right:} $M=1000$, $\Delta t = 0.002$.}
      \label{KdV_cospix}
\end{figure}

Compared to the schemes tested in \cite{zhao2000multisymplectic,ascher2005symplectic}, our schemes do also perform well; see Figure \ref{KdV_cospix_AM}, where we have plotted solutions by our schemes for the same discretization parameters used in Example 5.3 of \cite{ascher2005symplectic}. The reference solution is found by the implicit midpoint scheme of \cite{ascher2005symplectic} with $\theta = 1$ and very fine discretization in space and time: $M = 2000$ and $\Delta t = 0.0001$. We observe that the LILEP scheme behaves similarly to the multi-symplectic box scheme of Arscher and McLachlan (see figures 3 and 4 in \cite{ascher2005symplectic}), seemingly with the same superior stability for rough discretization in space and time. The LIGEP scheme, on the other hand, starts to blow up at around $t=1$ when $M=60$, $\Delta t=1/150$, but produces for $M=100$, $\Delta t=0.004$ a solution that is much closer to the correct solution than any of the schemes tested in \cite{ascher2005symplectic} (see Figure 3 in that paper for comparison).

\begin{figure}[ht]
		\centering
	  \begin{minipage}{.47\textwidth}
		\centering
                \includegraphics[width=0.99\linewidth]{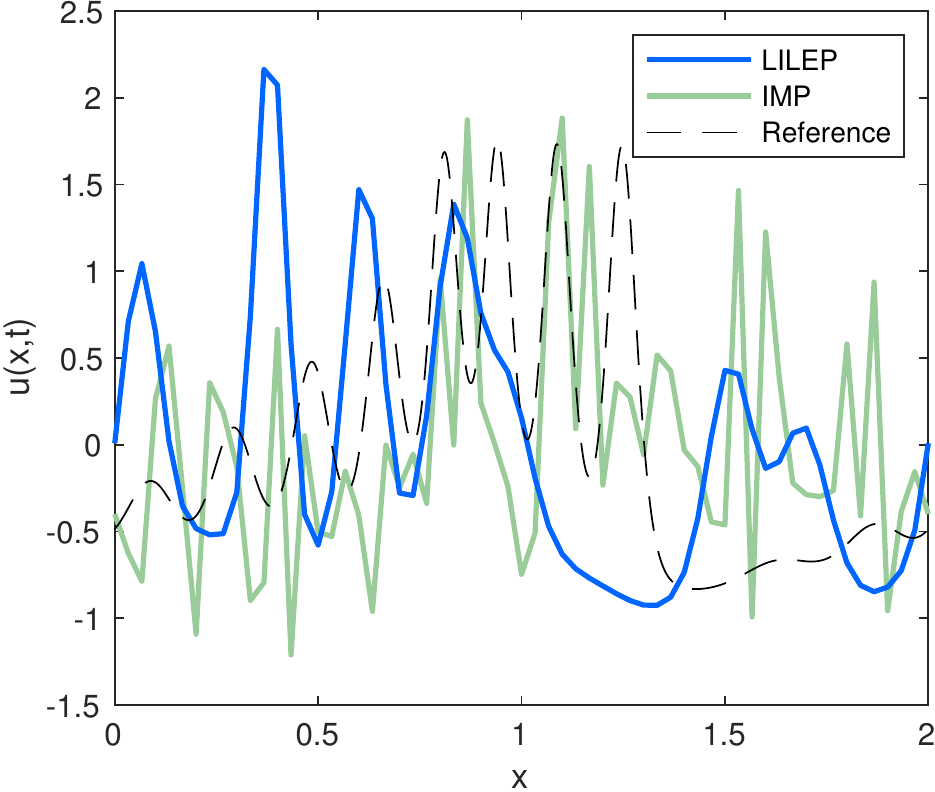}
        \end{minipage}\hspace{14pt}
          \begin{minipage}{.47\textwidth}
        \centering
                \includegraphics[width=0.99\linewidth]{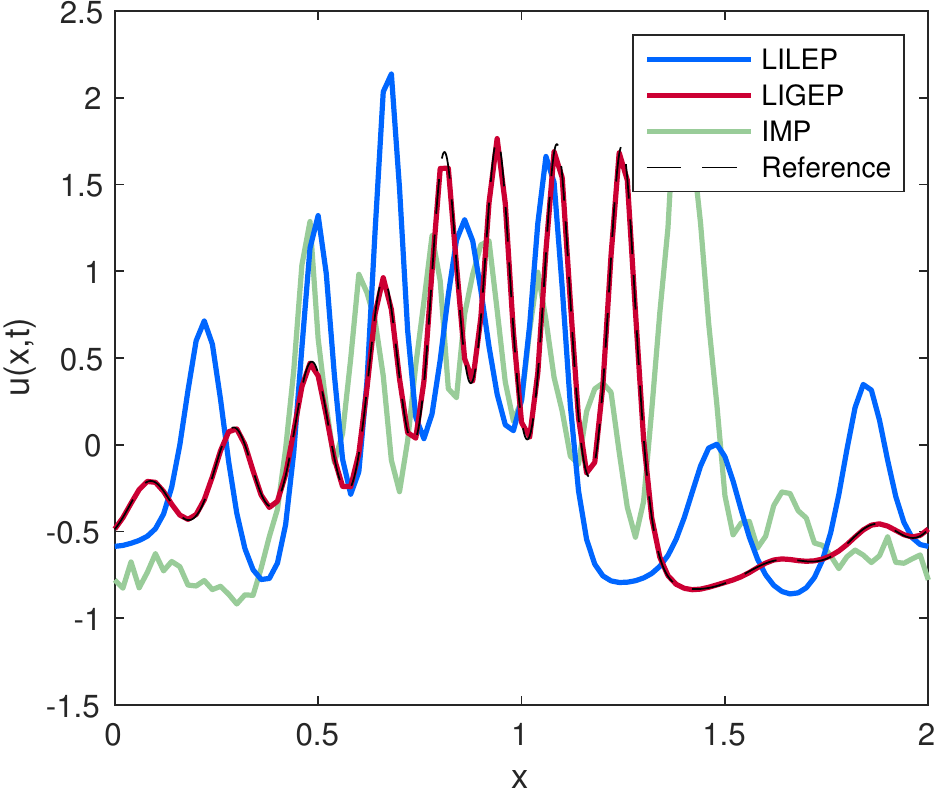}
        \end{minipage}
      \caption{Solutions of test problem 1 at time $t=10$ by our schemes and the implicit midpoint scheme (IMP) as given in \cite{ascher2005symplectic} (with $\theta = 2/3$ in the left figure and $\theta = 1$ in the right figure). \textit{Left:} $M=60$, $\Delta t = 1/150$. \textit{Right:} $M=100$, $\Delta t = 0.004$.}
      \label{KdV_cospix_AM}
\end{figure}

\subsubsection*{Test problem 2}
To get quantitative results on the performance of our methods, we wish to study a problem with a known solution. For the KdV equation with $\gamma=1$, $\eta=6$, initial value $u_0(x)= \frac{1}{2} c \, \mathrm{sech}^2(-x+P/2)$ and periodic boundary conditions $u(x+P,t) = u(x,t)$, the exact solution is a soliton moving with a constant speed $c$ in the positive $x$-direction while keeping its initial shape. That is,
\begin{equation*}
u(x,t)= \frac{1}{2} c \, \mathrm{sech}^2((-x+c t) \text{ }\mathrm{mod} \text{ } P-P/2).
\end{equation*}
In our numerical experiments, $c=4$ and $P=20$. For this problem, we have used the central difference operator to approximate $\partial_x$ in the GEP and LIGEP schemes, since it gives good results and yields considerably shorter computational time than if the pseudospectral operator is used. The proposed methods all show very good stability conditions when applied to this problem, as expected by methods conserving some invariant. The initial shape of the wave is well kept for long integration times, even when quite large step sizes in space and time are used; Figure \ref{KdV_soliton} gives a good illustration of this. As in the previous example, we again observe that little is lost in accuracy by choosing linearly implicit over fully implicit time integration. A close inspection of Figure \ref{KdV_soliton} also indicates that the local energy-preserving schemes preserve the shape of the wave better than the global energy-preserving schemes, while on the other hand, the GEP and LIGEP schemes are better than the LEP and LILEP schemes at preserving the speed of the wave. This is confirmed in Table \ref{tab:error_vs_dx} by measuring the shape error
$$\epsilon_{\text{shape}}:=\underset{\tau}{\text{min}}\parallel U^N-u(\cdot-\tau)\parallel_2^2$$
and phase error
$$\epsilon_{\text{phase}}:= \lvert\underset{\tau}{\text{argmin}}\parallel U^N-u(\cdot-\tau)\parallel_2^2-c t\rvert,$$
where $U^N$ is the numerical solution at end time $t$.

\begin{figure}[ht]
\centering
      \includegraphics[width=0.66\textwidth]{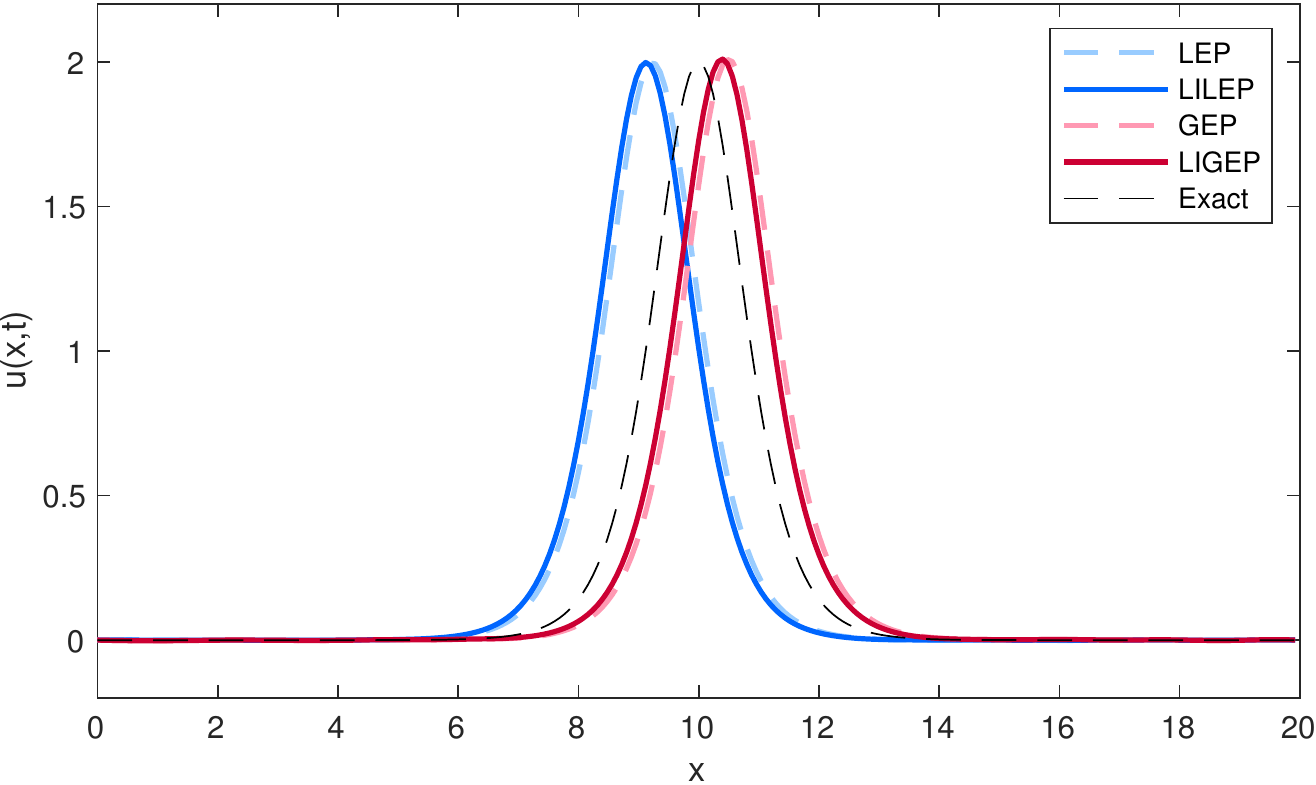}
      \caption{The soliton solution of the KdV equation at time $t=100$, with $M = 250$ discretization points in space and a time step $\Delta t = 0.01$.}\label{KdV_soliton}
\end{figure}

\begin{table}[!ht]
{
\caption{Phase and shape errors and the computational time (CT) for different schemes applied to test problem 2 of the KdV equation, for varying number of discretization points $M$, with time step $\Delta t = 0.01$ and end time $t=100$.}
\label{tab:error_vs_dx}
\begin{center}
\resizebox{\columnwidth}{!}{
\begin{tabular}{lccccccccc}
\multicolumn{1}{c}{$M$} & \multicolumn{3}{c}{200} & \multicolumn{3}{c}{400} & \multicolumn{3}{c}{600} \\
\hline
& $\epsilon_\text{shape}$ & $\epsilon_\text{phase}$ & CT & $\epsilon_\text{shape}$ & $\epsilon_\text{phase}$ & CT & $\epsilon_\text{shape}$ & $\epsilon_\text{phase}$ & CT\\
\hline
LEP      	& 4.67e-3  	& 1.12 & 21.86 & 1.22e-3 & 3.81e-1 & 35.89 & 5.86e-4 & 2.43e-1 & 51.92      \\
LILEP    	& 4.10e-3   	& 1.23 & 5.14 & 5.26e-4 & 4.88e-1 & 8.26 & 1.45e-4 & 3.50e-1 & 10.89  \\
GEP 				& 1.62e-2   	& 8.61e-1  & 19.53 & 3.66e-3 & 1.16e-1 & 34.09 & 1.71e-3 & 2.32e-2 & 49.45    \\
LIGEP 			& 1.71e-2    	& 7.50e-1  & 6.84 & 4.39e-3 & 5.19e-5 & 8.10 & 2.47e-3 & 1.31e-1 & 12.52    \\
\hline
\end{tabular}
}
\end{center}
}
\end{table}

In Figure \ref{KdV_timecost}, we have plotted the computational time required to reach a certain accuracy in the global error for the different methods, both at time $t=0.5$ and at time $t=10$. We compare our methods to the fully implicit LEP and GEP schemes of \cite{gong2014some}, to a scheme based on discretizing the standard form \eqref{eq_KdV} of the KdV equation in space and applying the AVF method in time (AVFM), and also to two of the schemes studied in \cite{ascher2005symplectic}: the multi-symplectic box scheme (MSB) and the implicit midpoint scheme (IMP). Most notably we see from both plots in Figure \ref{KdV_timecost} that the linearly implicit schemes perform better than the fully implicit schemes. Also, we see that at time $t=0.5$ the global error is lowest for the LILEP scheme, while at $t=10$ it is lowest for the LIGEP scheme. This is in accordance with the schemes' phase and shape errors, which can be observed from Figure \ref{KdV_soliton} and Table \ref{tab:error_vs_dx}; with increasing time, the phase error becomes more dominant, and thus the scheme with the smallest phase error becomes increasingly advantageous.

\begin{figure}[ht]
		\centering
	  \begin{minipage}{.47\textwidth}
		\centering
                \includegraphics[width=0.99\linewidth]{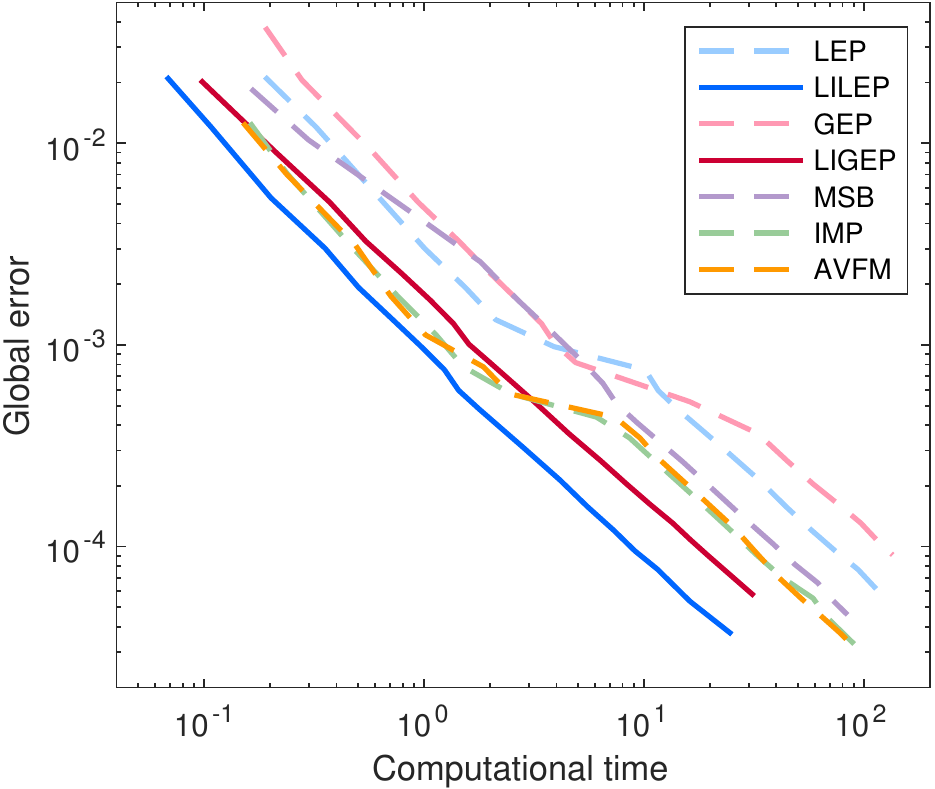}
        \end{minipage}\hspace{14pt}
          \begin{minipage}{.47\textwidth}
        \centering
                \includegraphics[width=0.99\linewidth]{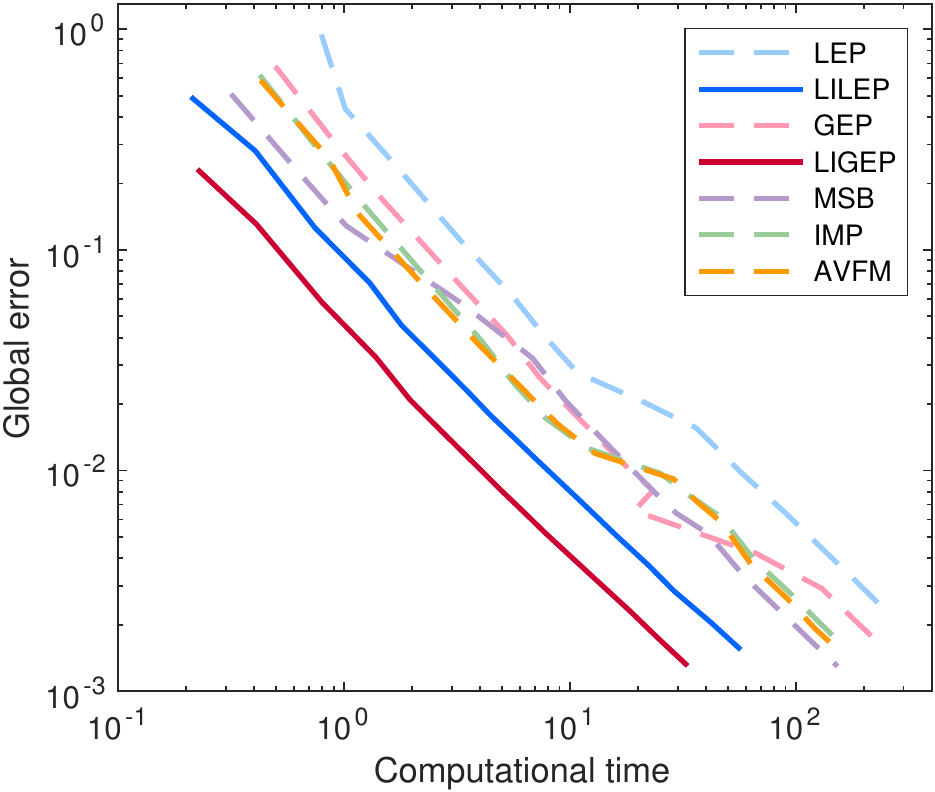}
        \end{minipage}
      \caption{Computational time required to reach a given global error, with $\frac{\Delta x}{\Delta t}$ fixed, for test problem 2 of the KdV equation solved at time $t$. \textit{Left:} $t = 0.5$, $\frac{\Delta x}{\Delta t} = 40$. \textit{Right:} $t = 10$, $\frac{\Delta x}{\Delta t} = 8$.}
      \label{KdV_timecost}
\end{figure}

Figure \ref{KdV_soliton_energies} illustrates how the different schemes preserve a discrete approximation to the energy to machine precision. That is, the linearly implicit schemes LILEP and LIGEP preserve exactly the discrete energies \eqref{discrete global energy from local energy kdv} and \eqref{discrete global energy from global energy kdv}, respectively, while keeping the discrete energies \eqref{discrete global energy from local energy kdv_LECpaper} and \eqref{discrete global energy from global energy kdv_LECpaper}, respectively, within some bound which depends on the discretization parameters. Likewise, the reverse is true for the fully implicit schemes. These observations fit well with our above results about the different discrete approximations to the energy: that for both the local energy preserving and the global energy preserving schemes, either discrete energy given can be seen as a modification of the other approximation. Finally, we have included plots in Figure \ref{KdV_orderplots} which confirm that our schemes are of second order in space and time.

\begin{figure}[ht]
		\centering
	  \begin{minipage}{.47\textwidth}
		\centering
                \includegraphics[width=0.99\linewidth]{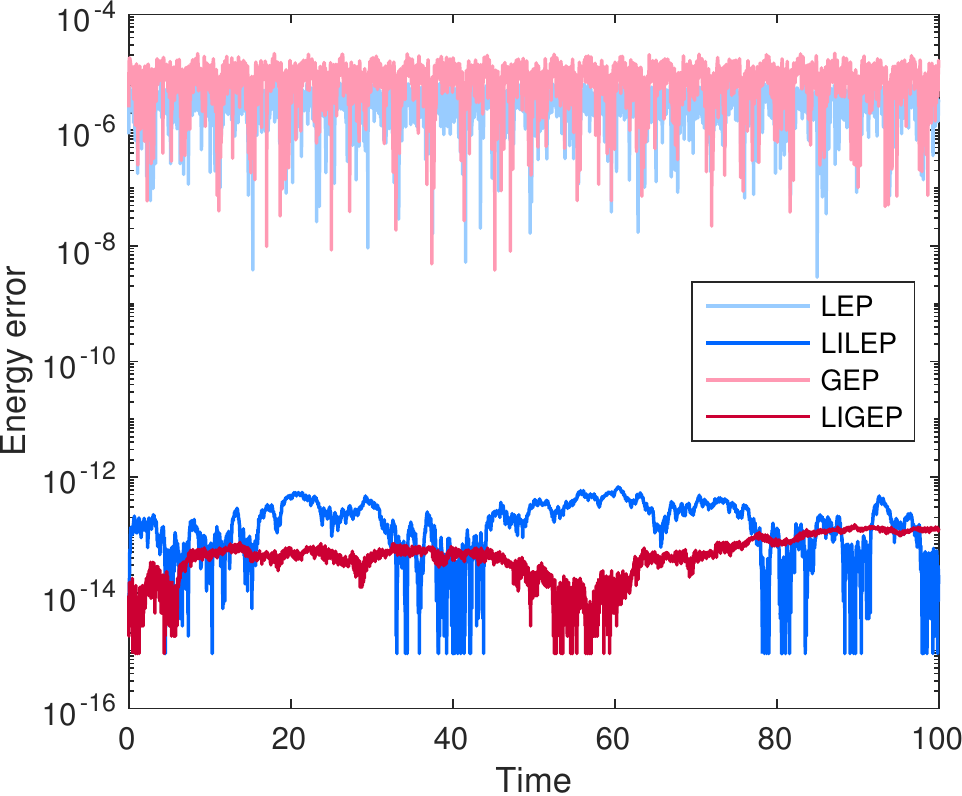}
        \end{minipage}\hspace{14pt}
          \begin{minipage}{.47\textwidth}
        \centering
                \includegraphics[width=0.99\linewidth]{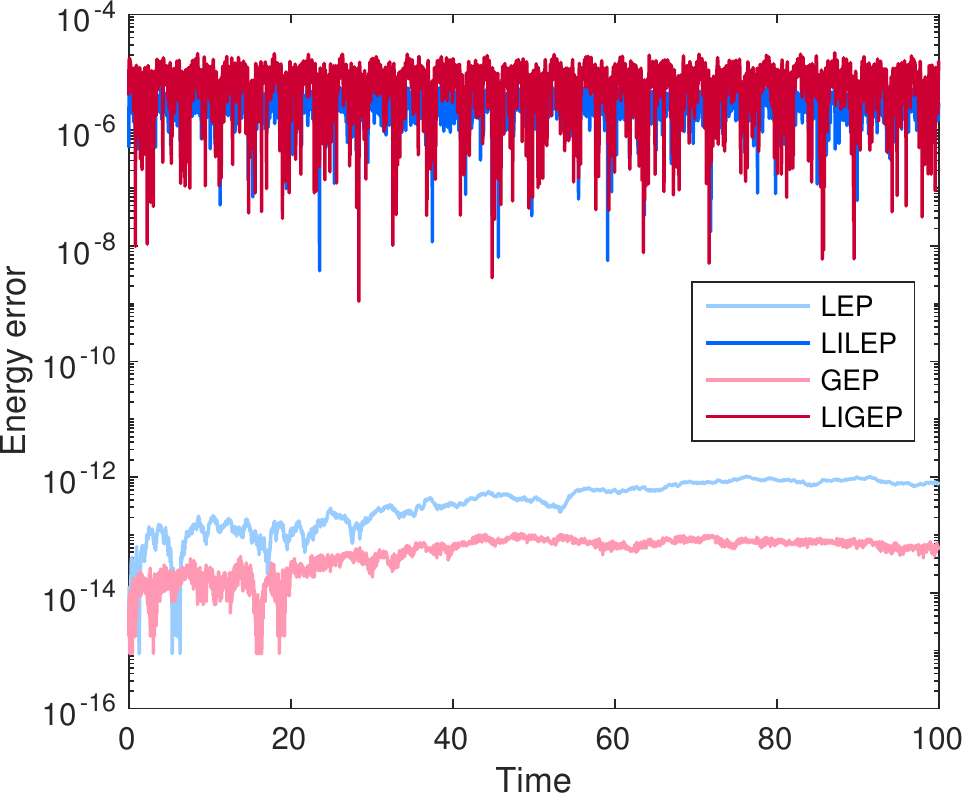}
        \end{minipage}      
      \caption{Error in discrete approximations to the global energy, by our methods and the fully implicit schemes of Gong et al. \textit{Left:} The error in \eqref{discrete global energy from local energy kdv} for LEP/LILEP and the error in \eqref{discrete global energy from global energy kdv} for GEP/LIGEP, for test problem 2 solved with $M=250$ discretization points in space and time step $\Delta t = 0.01$. \textit{Right:} The error in \eqref{discrete global energy from local energy kdv_LECpaper} for LEP/LILEP and the error in \eqref{discrete global energy from global energy kdv_LECpaper} for GEP/LIGEP.}
\label{KdV_soliton_energies}
\end{figure}

\begin{figure}[ht]
		\centering
	  \begin{minipage}{.47\textwidth}
		\centering
                \includegraphics[width=0.99\linewidth]{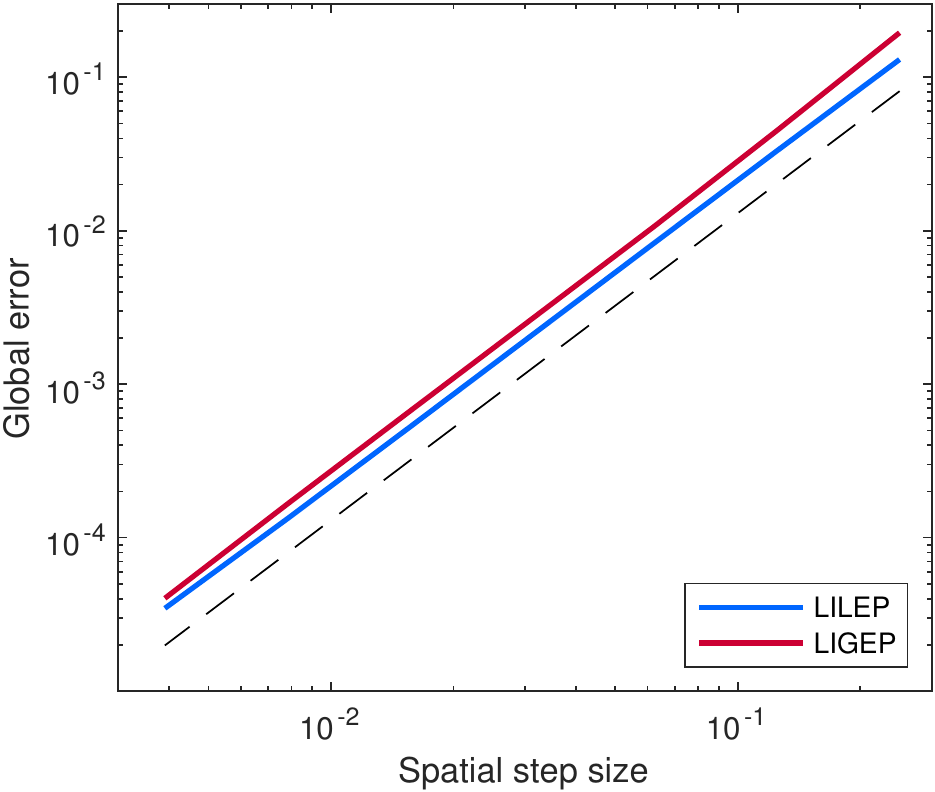}
        \end{minipage}\hspace{14pt}
          \begin{minipage}{.47\textwidth}
        \centering
                \includegraphics[width=0.99\linewidth]{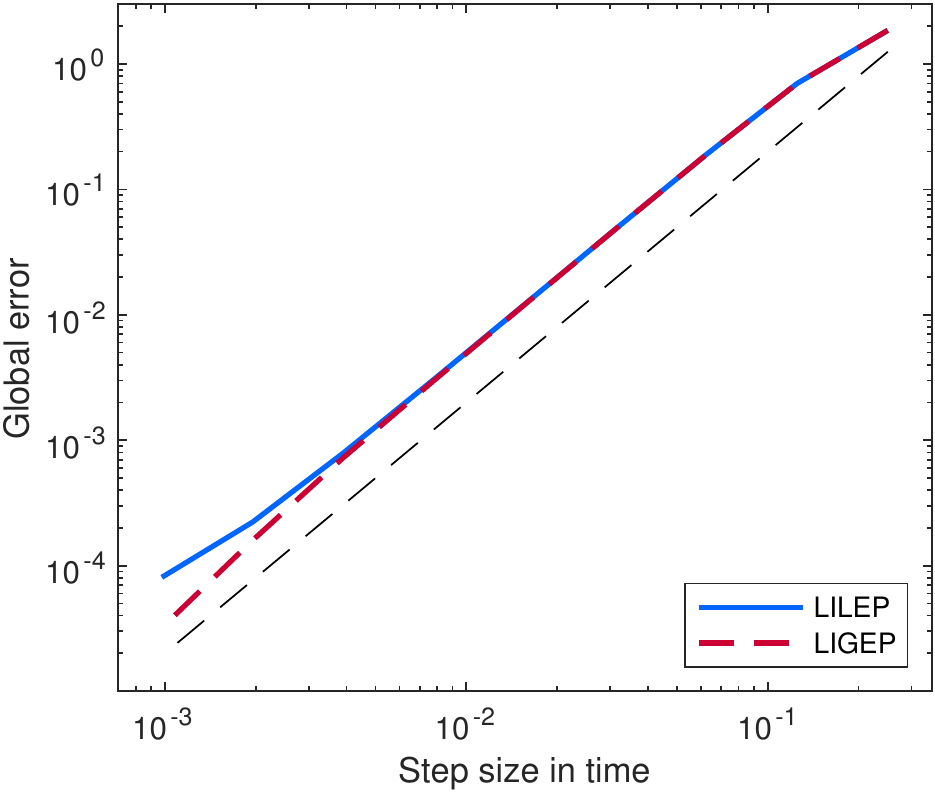}
        \end{minipage}  
      \caption{Order plots for the LILEP and LIGEP schemes, solving the second test problem for the KdV equation at time $t=1$. The black, dashed line is a reference line with slope $2$ in both plots. \textit{Left:} Fixed temporal step $\Delta t = 2 \times 10^{-4}$. \textit{Right:} Fixed spatial step $\Delta x = 4 \times 10^{-3}$.}\label{KdV_orderplots}
\end{figure}

\subsection{Zakharov--Kuznetsov equation}

Kahan's method is previously shown to have nice properties when applied to integrable ODE systems \cite{celledoni2012geometric,celledoni2012integrability}, and to perform well compared to other linearly implicit methods when applied to the KdV and Camassa--Holm equations \cite{eidnes2019linearly}, which are completely integrable PDEs. 
We wish to test our methods also on non-integrable systems, as well as on higher-dimensional problems. Therefore we consider the Zakharov--Kuznetsov equation, which is a non-integrable PDE \cite{Hu2008new,Nishiyama2012conservative}. 
This two-dimensional generalisation of the KdV equation has a variety of applications, see e.g.\ \cite{iwasaki1990cylindrical} for a brief summary.

Applying the LILEP method \eqref{LILECL_Scheme_2d} to the Zakharov--Kuznetsov equation \eqref{eq:Zakharov-Kuznetsov} multi-symplectified as described in Example \ref{ex:ZK}, we find
\begin{equation*}
\begin{split}
\delta_x \mu_t \mu_y \phi_{j,k}^n &= \mu_t \mu_x \mu_y u_{j,k}^n,\\
\frac{1}{2} \delta_t \mu_x \mu_y \phi_{j,k}^n + \delta_x \mu_t \mu_y v_{j,k}^n + \delta_y \mu_t \mu_x w_{j,k}^n &= \mu_t \mu_x \mu_y p_{j,k}^n - \frac{1}{2} \mu_x \mu_y u_{j,k}^n \mu_x \mu_y u_{j,k}^{n+1},\\
\delta_x \mu_t \mu_y w_{j,k}^n - \delta_y \mu_t \mu_x v_{j,k}^n &= 0,\\
-\frac{1}{2} \delta_t \mu_x \mu_y u_{j,k}^n - \delta_x \mu_t \mu_y p_{j,k}^n &= 0,\\
- \delta_x \mu_t \mu_y u_{j,k}^n + \delta_y \mu_t \mu_x q_{j,k}^n &= -\mu_t \mu_x \mu_y v_{j,k}^n,\\
- \delta_x \mu_t \mu_y q_{j,k}^n - \delta_y \mu_t \mu_x u_{j,k}^n &= -\mu_t \mu_x \mu_y w_{j,k}^n.
\end{split}
\end{equation*}
Upon eliminating all variables except $u$, we are left with
\begin{equation*}
\delta_t \mu_t \mu_x^3 \mu_y u_{j,k}^n + \frac{1}{2} \delta_x \mu_t \mu_x \mu_y (\mu_x \mu_y u_{j,k}^n \mu_x \mu_y u_{j,k}^{n+1}) + \delta_x^3 \mu_t^2 \mu_y^2 u_{j,k}^n + \delta_x \delta_y^2 \mu_t^2 \mu_x^2 u_{j,k}^n=0.
\end{equation*}
The operator $\mu_t$ is again superfluous. Hence we get the scheme
\begin{equation*}
\delta_t \mu_x^3 \mu_y u_{j,k}^n + \frac{1}{2} \delta_x \mu_x \mu_y (\mu_x \mu_y u_{j,k}^n \mu_x \mu_y u_{j,k}^{n+1}) + \delta_x^3 \mu_t \mu_y^2 u_{j,k}^n + \delta_x \delta_y^2 \mu_t \mu_x^2 u_{j,k}^n=0.
\end{equation*}
This scheme preserves 
\begin{equation*}
\begin{split}
\bar{\mathcal{E}}_L^n = \frac{1}{6} \Delta x \Delta y \sum_{j=0}^{M_x-1} \sum_{k=0}^{M_y-1} &\Big( 2 \delta_x \mu_y u_{j,k}^{n+1} \delta_x \mu_y u^n_{j,k} + (\delta_x \mu_y u_{j,k}^{n})^2 + 2 \delta_y \mu_x u_{j,k}^{n+1} \delta_y \mu_x u^n_{j,k} \\
& \, + (\delta_y \mu_x u_{j,k}^{n})^2 - (\mu_x \mu_y u_{j,k}^n)^2 (\mu_x \mu_y u_{j,k}^{n+1}) \Big),
\end{split}
\end{equation*}
which is a two-step discrete approximation of the energy
\begin{equation*}
\mathcal{E} = \int (\frac{1}{2}(\nabla u)^2 - \frac{1}{6}u^3) \, d\Omega.
\end{equation*}

Similarly, applying the linearly implicit global energy-preserving method \eqref{LIGECL_Scheme_2d} to \eqref{eq:ZKsystem}, we get the scheme
\begin{equation*}
\delta_t u_{j,k}^n + \frac{1}{2} (D_x(u^n u^{n+1}))_{j,k} + \mu_t(D_x^3(u^n))_{j,k} + \mu_t(D_xD_y^2(u^n))_{j,k} =0,
\end{equation*}
which preserves the two-step discrete energy approximation
\begin{equation*}
\begin{split}
\bar{\mathcal{E}}^n =& \frac{1}{6} \Delta x \, \Delta y \sum_{j=0}^{M_x-1} \sum_{k=0}^{M_y-1} \Big( 2 (D_xu^n)_{j,k} (D_xu^{n+1})_{j,k} + ((D_xu^n)_{j,k})^2 \\
&+ 2 (D_yu^n)_{j,k} (D_yu^{n+1})_{j,k} + ((D_yu^n)_{j,k})^2 - (u_{j,k}^n)^2 u_{j,k}^{n+1} \Big).
\end{split}
\end{equation*}

\subsubsection*{Test problem}

Taking a note from a numerical experiment performed in \cite{bridges2001multi}, we study the formation of cylindrical soliton pulses on the domain $\left[0,P\right] \times \left[0,P\right]$, $P=30$, following the initial condition
\begin{equation*}
u_0(x,y) = 3 c \, \mathrm{sech}^2 \big(\frac{1}{2}\sqrt{c}(x-P/2)\big) + \xi(y),
\end{equation*}
where $\xi(y)$ is a random perturbation.

Upon trying the different schemes we can immediately conclude that the local energy-preserving schemes are superior for this problem when compared to the global energy-preserving schemes. The GEP and LIGEP schemes are too costly when the pseudospectral operator is used, and gives oscillatory behaviour in the $y$-direction when the central difference operator is used, unless the discretization in this direction is very fine. Although the global energy-preserving schemes with the central difference operator are slightly faster then the local energy-preserving schemes, as can be seen in Table \ref{tab:timecost_ZK}, this is undermined by the cost of the extra discretization points needed to avoid oscillations in the former case. As was the case for the KdV problem, we see little difference between the linearly implicit schemes and their fully implicit counterparts. This can be seen in Figure \ref{fig:ZK_allplots}, as can the oscillations in $y$-direction of the solution found by the GEP and LIGEP methods. The plots in Figure \ref{fig:ZK_allplots} can be compared to the plot in Figure \ref{fig:ZK_big}, where the same problem is solved by the LILEP method using finer discretization in space and time. The initial random perturbation in $y$-direction over $75$ points is then transferred over to $225$ points using linear interpolation.

\begin{figure}[ht]
		\centering
	  \begin{minipage}{.47\textwidth}
		\centering
                \includegraphics[width=0.99\linewidth]{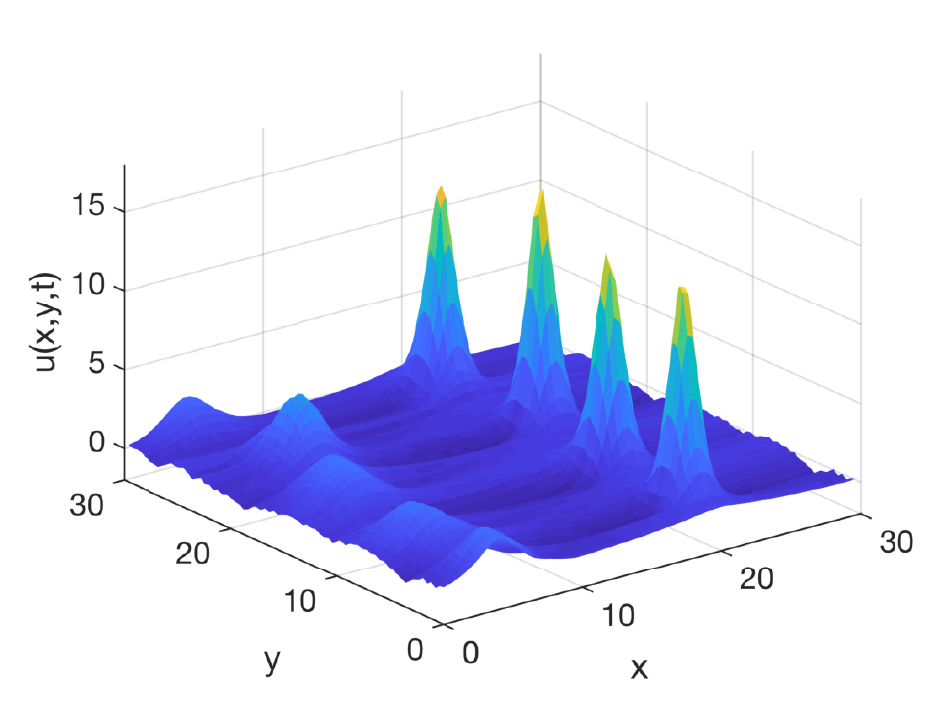}
        \end{minipage}\hspace{14pt}
          \begin{minipage}{.47\textwidth}
        \centering
                \includegraphics[width=0.99\linewidth]{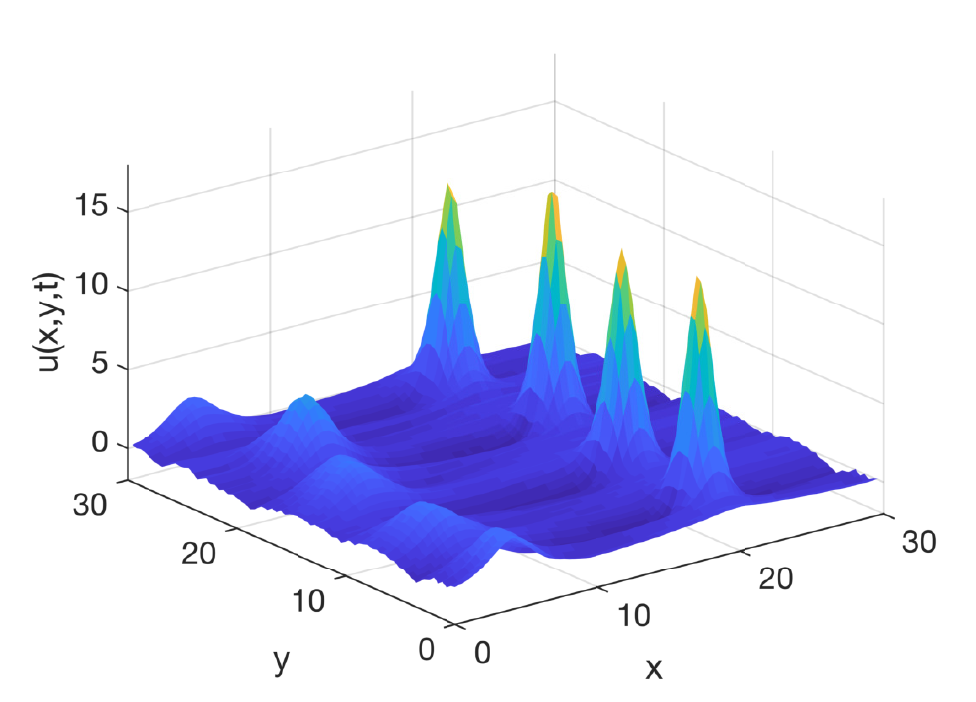}
        \end{minipage}  
        	  \begin{minipage}{.47\textwidth}
		\centering
                \includegraphics[width=0.99\linewidth]{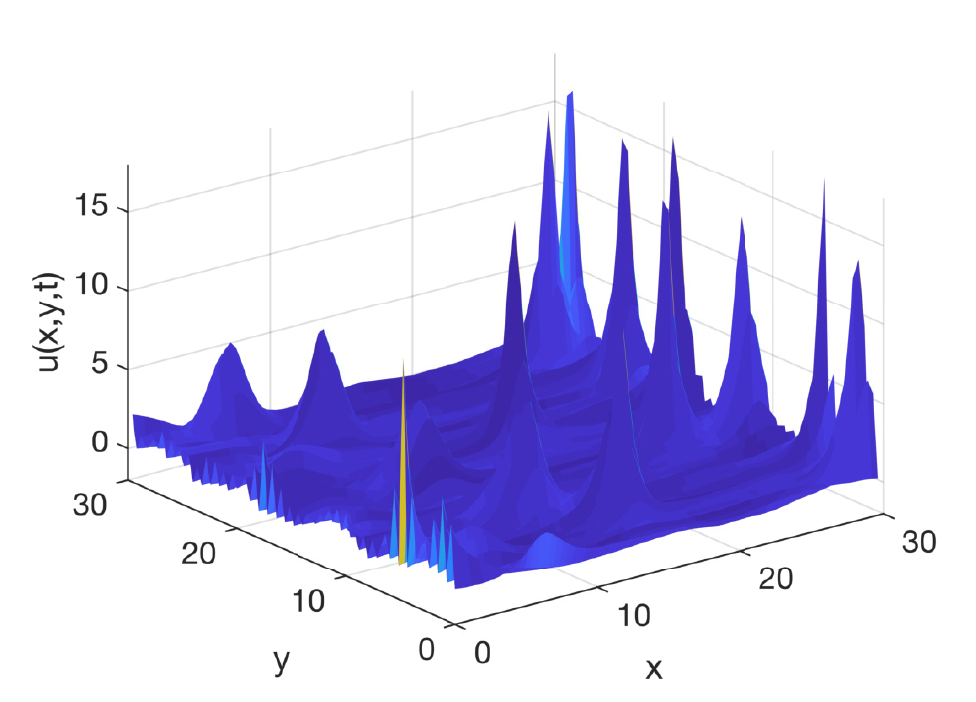}
        \end{minipage}\hspace{14pt}
          \begin{minipage}{.47\textwidth}
        \centering
                \includegraphics[width=0.99\linewidth]{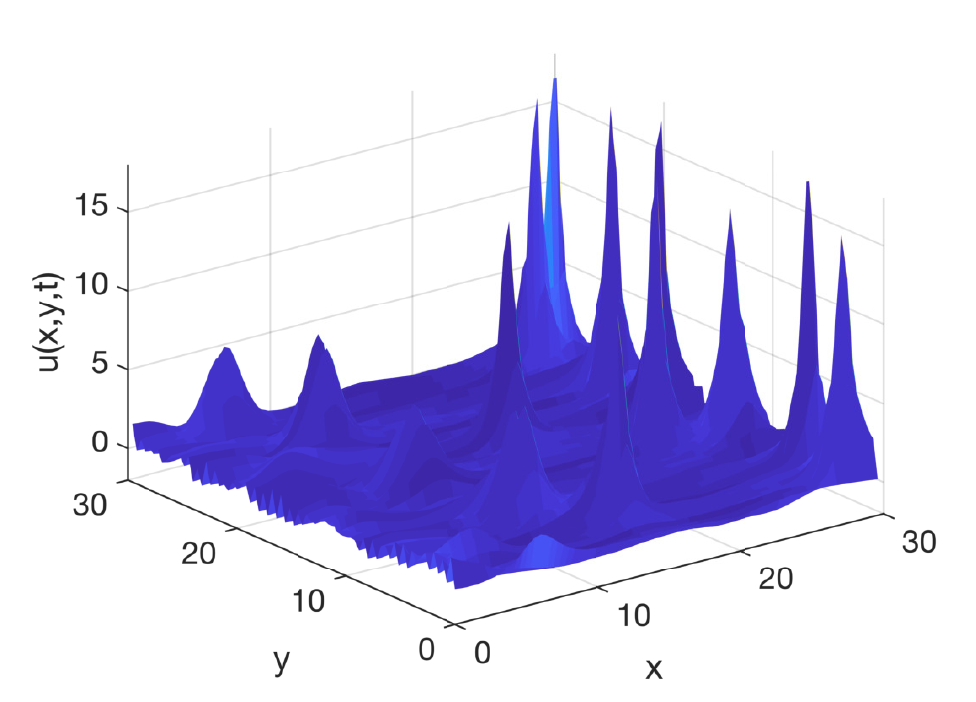}
        \end{minipage}  
      \caption{The test problem of the Zakharov--Kuznetsov equation solved at time $t=15$ by the different schemes, with $M=M_x=M_y=75$ points in each spatial direction and $\Delta t = 0.1$.}
      \label{fig:ZK_allplots}
\end{figure}

\begin{figure}[ht]
\centering
      \includegraphics[width=0.78\textwidth]{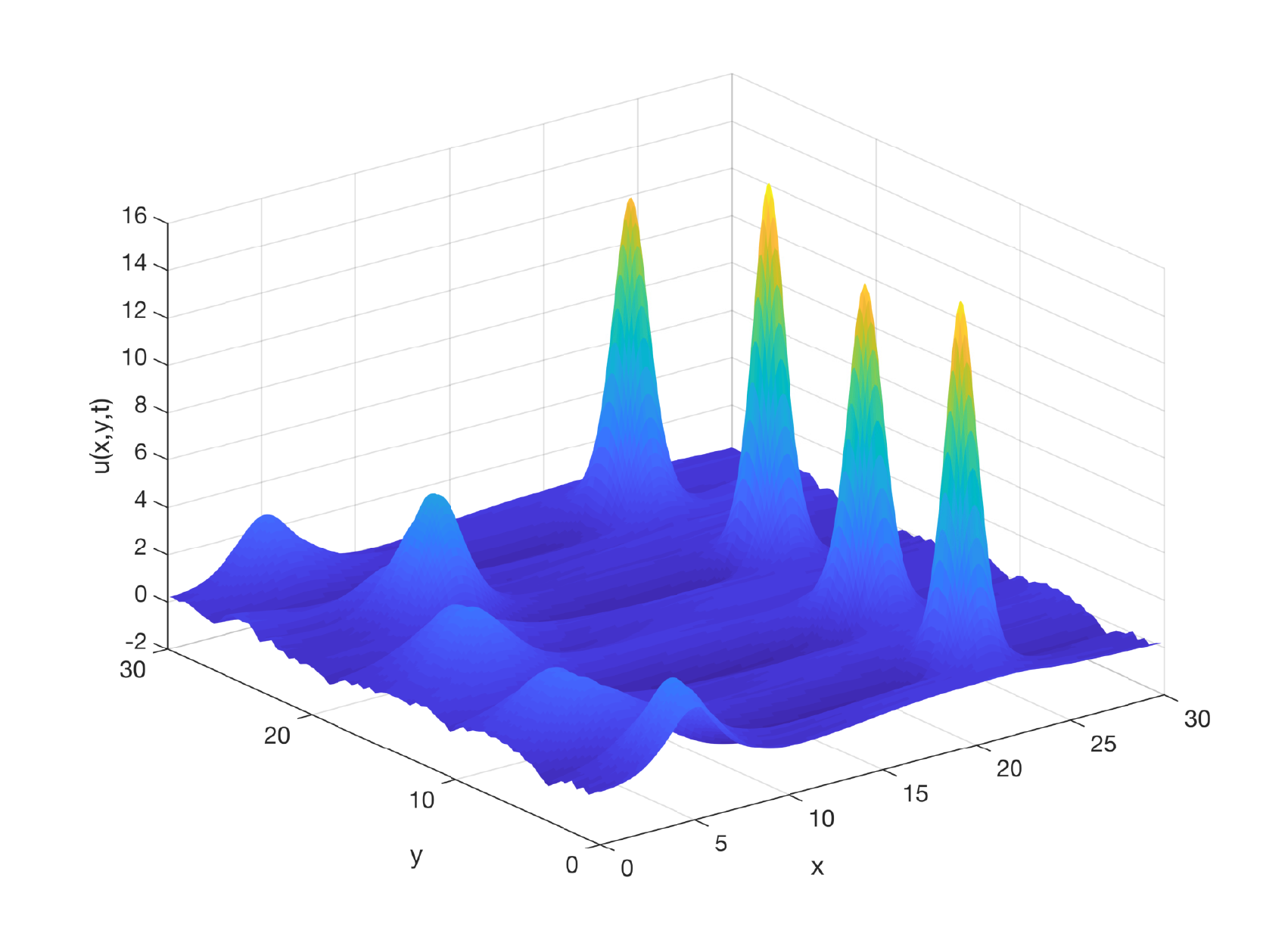}
      \caption{The test problem of the Zakharov--Kuznetsov equation solved at time $t=15$ by the LILEP scheme, with $M=M_x=M_y=225$ discretization points in each spatial direction and a temporal step size $\Delta t = 0.001$.}\label{fig:ZK_big}
\end{figure}

\begin{table}[ht]
{
\caption{Running time, in seconds, for computing $100$ steps in time by the various schemes and various number of discretization points $M=M_x=M_y$ in each spatial direction, solving our test problem for the Zakharov--Kuznetsov equation.}
\label{tab:timecost_ZK}
\begin{center}
\begin{tabular}{lcccccccc}
\hline
\multicolumn{1}{c}{$M$} & $45$ & $75$ & $105$ & $135$ & $165$ & $195$ & $225$ & $255$\\
\hline
LEP      	& 5.10 & 32.20 & 48.43 & 101.59 & 125.23 & 258.64 & 353.98 & 510.00     \\
LILEP    	& 2.04 & 8.87 & 14.57 & 31.02     & 37.25  & 78.98 & 108.02 & 157.91  \\
GEP 			& 3.62 & 19.54 & 41.87 & 73.59   & 122.31 & 186.74 & 258.19 & 352.36     \\
LIGEP 		& 1.38 & 6.00 & 13.45 & 23.79     & 39.31  & 60.27  & 83.32  & 113.13   \\
\hline
\end{tabular}
\end{center}
}
\end{table}

\section{Concluding remarks}\label{concluding remarks}
In this paper, we propose two types of linearly implicit methods with conservation properties for cubic invariants of multi-symplectic PDEs. The linearly implicit local energy-preserving (LILEP) method preserves a discrete approximation to the local energy conservation law, and by extension, the global energy whenever periodic boundary conditions are considered. The linearly implicit global energy-preserving (LIGEP) method preserves the global energy without inheriting the local preservation from the continuous system. 

We test our methods on two PDEs: the one-dimensional, integrable Korteweg--de Vries (KdV) equation and the two-dimensional, non-integrable Zakharov--Kuznetsov equation. The numerical experiments confirm that the proposed methods are of second order both in space and time and that they preserve the expected local and global energy conservation laws. We have observed excellent stability properties for the LILEP scheme in particular, and very high accuracy in the LIGEP scheme even for quite coarse discretization when a Fourier pseudospectral operator is used to approximate the spatial derivative. Compared to the fully implicit methods of Gong et al.\ in \cite{gong2014some}, which was an inspiration for this paper, our methods show comparable wave profiles, global errors and energy errors, at a significantly lower computational cost. For two-dimensional problems, where fully implicit schemes quickly become very expensive to compute, the combination of local energy-preservation and a linearly implicit method seems to provide for a very competitive method.

Although we have only considered the preservation of cubic invariants in this paper, our schemes can be extended to preserve higher order polynomials by the polarisation techniques for generalising Kahan's method suggested in \cite{celledoni2015discretization}. This would result in $(p-2)$-step methods for preservation of a discrete $p$-order polynomial invariant. Although the idea behind this is clear, obtaining the concrete schemes is not straightforward, and thus we leave it for future research.

\subsection*{Acknowledgements}
This work was supported by the European Union's Horizon 2020 research and innovation programme under the Marie Sk{\l{}}odowska-Curie grant agreement No. 691070. The authors wish to express gratitude to Elena Celledoni and Brynjulf Owren for constructive discussions and helpful suggestions during our work on this paper, and to Benjamin Tapley for helping with the language.

\bibliography{linimplocbibRev}

\begin{thebibliography}{10}

\bibitem{hairer2006geometric}
E.~Hairer, C.~Lubich, and G.~Wanner, {\em Geometric numerical integration},
  vol.~31 of {\em Springer Series in Computational Mathematics}.
\newblock Springer-Verlag, Berlin, second~ed., 2006.
\newblock Structure-preserving algorithms for ordinary differential equations.

\bibitem{furihata2011discrete}
D.~Furihata and T.~Matsuo, {\em Discrete variational derivative method}.
\newblock Chapman \& Hall/CRC Numerical Analysis and Scientific Computing, CRC
  Press, Boca Raton, FL, 2011.
\newblock A structure-preserving numerical method for partial differential
  equations.

\bibitem{christiansen2011topics}
S.~H. Christiansen, H.~Z. Munthe-Kaas, and B.~Owren, ``Topics in
  structure-preserving discretization,'' {\em Acta Numer.}, vol.~20,
  pp.~1--119, 2011.

\bibitem{Feynmanbook}
R.~P. Feynman, R.~B. Leighton, and M.~Sands, {\em The {F}eynman lectures on
  physics. {V}ol. 1: {M}ainly mechanics, radiation, and heat}.
\newblock Addison-Wesley Publishing Co., Inc., Reading, Mass.-London, 1963.

\bibitem{bridges1997multi}
T.~J. Bridges, ``Multi-symplectic structures and wave propagation,'' vol.~121,
  pp.~147--190, 1997.

\bibitem{li1995finite}
S.~Li and L.~Vu-Quoc, ``Finite difference calculus invariant structure of a
  class of algorithms for the nonlinear {K}lein-{G}ordon equation,'' {\em SIAM
  J. Numer. Anal.}, vol.~32, no.~6, pp.~1839--1875, 1995.

\bibitem{labudde1975energy}
R.~A. LaBudde and D.~Greenspan, ``Energy and momentum conserving methods of
  arbitrary order for the numerical integration of equations of motion. {II}.
  {M}otion of a system of particles,'' {\em Numer. Math.}, vol.~26, no.~1,
  pp.~1--16, 1976.

\bibitem{mclachlan1999geometric}
R.~I. McLachlan, G.~R.~W. Quispel, and N.~Robidoux, ``Geometric integration
  using discrete gradients,'' {\em R. Soc. Lond. Philos. Trans. Ser. A Math.
  Phys. Eng. Sci.}, vol.~357, no.~1754, pp.~1021--1045, 1999.

\bibitem{brugnano10hbv}
L.~Brugnano, F.~Iavernaro, and D.~Trigiante, ``Hamiltonian boundary value
  methods (energy preserving discrete line integral methods),'' {\em JNAIAM. J.
  Numer. Anal. Ind. Appl. Math.}, vol.~5, no.~1-2, pp.~17--37, 2010.

\bibitem{celledoni2012preserving}
E.~Celledoni, V.~Grimm, R.~I. McLachlan, D.~I. McLaren, D.~O'Neale, B.~Owren,
  and G.~R.~W. Quispel, ``Preserving energy resp. dissipation in numerical
  {PDE}s using the ``average vector field'' method,'' {\em J. Comput. Phys.},
  vol.~231, no.~20, pp.~6770--6789, 2012.

\bibitem{bridges2001multi}
T.~J. Bridges and S.~Reich, ``Multi-symplectic spectral discretizations for the
  {Z}akharov--{K}uznetsov and shallow water equations,'' {\em Phys. D},
  vol.~152/153, pp.~491--504, 2001.
\newblock Advances in nonlinear mathematics and science.

\bibitem{bridges1997geometric}
T.~J. Bridges, ``A geometric formulation of the conservation of wave action and
  its implications for signature and the classification of instabilities,''
  {\em Proc. Roy. Soc. London Ser. A}, vol.~453, no.~1962, pp.~1365--1395,
  1997.

\bibitem{leimkuhler2004simulating}
B.~Leimkuhler and S.~Reich, {\em Simulating {H}amiltonian dynamics}, vol.~14 of
  {\em Cambridge Monographs on Applied and Computational Mathematics}.
\newblock Cambridge University Press, Cambridge, 2004.

\bibitem{Sunmultisym}
Y.~Sun and P.~S.~P. Tse, ``Symplectic and multisymplectic numerical methods for
  {M}axwell's equations,'' {\em J. Comput. Phys.}, vol.~230, no.~5,
  pp.~2076--2094, 2011.

\bibitem{EPmultisym}
Y.-W. Li and X.~Wu, ``General local energy-preserving integrators for solving
  multi-symplectic {H}amiltonian {PDE}s,'' {\em J. Comput. Phys.}, vol.~301,
  pp.~141--166, 2015.

\bibitem{Hydonlocalconserv}
G.~Frasca-Caccia and P.~E. Hydon, ``Locally conservative finite difference
  schemes for the modified {KDV} equation,'' {\em J. Comput. Dyn.}, vol.~6,
  no.~2, pp.~307--323, 2019.

\bibitem{MR2221062}
P.~Chartier, E.~Faou, and A.~Murua, ``An algebraic approach to invariant
  preserving integrators: the case of quadratic and {H}amiltonian invariants,''
  {\em Numer. Math.}, vol.~103, no.~4, pp.~575--590, 2006.

\bibitem{wang2008local}
Y.~Wang, B.~Wang, and M.~Qin, ``Local structure-preserving algorithms for
  partial differential equations,'' {\em Sci. China Ser. A}, vol.~51, no.~11,
  pp.~2115--2136, 2008.

\bibitem{marsden1998multisymplectic}
J.~E. Marsden, G.~W. Patrick, and S.~Shkoller, ``Multisymplectic geometry,
  variational integrators, and nonlinear {PDE}s,'' {\em Comm. Math. Phys.},
  vol.~199, no.~2, pp.~351--395, 1998.

\bibitem{reich2000multi}
S.~Reich, ``Multi-symplectic {R}unge-{K}utta collocation methods for
  {H}amiltonian wave equations,'' {\em J. Comput. Phys.}, vol.~157, no.~2,
  pp.~473--499, 2000.

\bibitem{gong2014some}
Y.~Gong, J.~Cai, and Y.~Wang, ``Some new structure-preserving algorithms for
  general multi-symplectic formulations of {H}amiltonian {PDE}s,'' {\em J.
  Comput. Phys.}, vol.~279, pp.~80--102, 2014.

\bibitem{li2015general}
Y.-W. Li and X.~Wu, ``General local energy-preserving integrators for solving
  multi-symplectic {H}amiltonian {PDE}s,'' {\em J. Comput. Phys.}, vol.~301,
  pp.~141--166, 2015.

\bibitem{kahan1993unconventional}
W.~Kahan, ``Unconventional numerical methods for trajectory calculations,''
  {\em Unpublished lecture notes}, vol.~1, p.~13, 1993.

\bibitem{celledoni2012geometric}
E.~Celledoni, R.~I. McLachlan, B.~Owren, and G.~R.~W. Quispel, ``Geometric
  properties of {K}ahan's method,'' {\em J. Phys. A}, vol.~46, no.~2,
  pp.~025201, 12, 2013.

\bibitem{celledoni2012integrability}
E.~Celledoni, R.~I. McLachlan, D.~I. McLaren, B.~Owren, and G.~R.~W. Quispel,
  ``Integrability properties of {K}ahan's method,'' {\em J. Phys. A}, vol.~47,
  no.~36, pp.~365202, 20, 2014.

\bibitem{celledoni2018geometric}
E.~Celledoni, D.~I. McLaren, B.~Owren, and G.~R.~W. Quispel, ``Geometric and
  integrability properties of {K}ahan's method: the preservation of certain
  quadratic integrals,'' {\em J. Phys. A}, vol.~52, no.~6, pp.~065201, 9, 2019.

\bibitem{matsuo2001dissipative}
T.~Matsuo and D.~Furihata, ``Dissipative or conservative finite-difference
  schemes for complex-valued nonlinear partial differential equations,'' {\em
  J. Comput. Phys.}, vol.~171, no.~2, pp.~425--447, 2001.

\bibitem{dahlby2011general}
M.~Dahlby and B.~Owren, ``A general framework for deriving integral preserving
  numerical methods for {PDE}s,'' {\em SIAM J. Sci. Comput.}, vol.~33, no.~5,
  pp.~2318--2340, 2011.

\bibitem{eidnes2019linearly}
S.~Eidnes, L.~Li, and S.~Sato, ``Linearly implicit structure-preserving schemes
  for {H}amiltonian systems,'' {\em arXiv preprint, arXiv:1901.03573}, 2019.

\bibitem{cai2018partitioned}
W.~Cai, H.~Li, and Y.~Wang, ``Partitioned averaged vector field methods,'' {\em
  J. Comput. Phys.}, vol.~370, pp.~25--42, 2018.

\bibitem{yang2017numerical}
X.~Yang, J.~Zhao, and Q.~Wang, ``Numerical approximations for the molecular
  beam epitaxial growth model based on the invariant energy quadratization
  method,'' {\em J. Comput. Phys.}, vol.~333, pp.~104--127, 2017.

\bibitem{jiang2019linearly}
C.~Jiang, Y.~Gong, W.~Cai, and Y.~Wang, ``A linearly implicit
  structure-preserving scheme for the {C}amassa--{H}olm equation based on
  multiple scalar auxiliary variables approach,'' {\em arXiv preprint,
  arXiv:1907.00167}, 2019.

\bibitem{jiang2018linear}
C.~Jiang, W.~Cai, and Y.~Wang, ``A linear-implicit and local energy-preserving
  scheme for the sine-{G}ordon equation based on the invariant energy
  quadratization approach,'' {\em arXiv preprint, arXiv:1808.06854}, 2018.

\bibitem{li13new}
H.~Li and J.~Sun, ``A new multi-symplectic {E}uler box scheme for the {BBM}
  equation,'' {\em Math. Comput. Modelling}, vol.~58, no.~7-8, pp.~1489--1501,
  2013.

\bibitem{Boussinesq}
A.~Dur\'{a}n, D.~Dutykh, and D.~Mitsotakis, ``On the multi-symplectic structure
  of {B}oussinesq-type systems. {I}: {D}erivation and mathematical
  properties,'' {\em Phys. D}, vol.~388, pp.~10--21, 2019.

\bibitem{Cohnmultisym}
D.~Cohen, T.~Matsuo, and X.~Raynaud, ``A multi-symplectic numerical integrator
  for the two-component {C}amassa-{H}olm equation,'' {\em J. Nonlinear Math.
  Phys.}, vol.~21, no.~3, pp.~442--453, 2014.

\bibitem{celledoni2015discretization}
E.~Celledoni, R.~I. McLachlan, D.~I. McLaren, B.~Owren, and G.~R.~W. Quispel,
  ``Discretization of polynomial vector fields by polarization,'' {\em Proc.
  A.}, vol.~471, no.~2184, pp.~20150390, 10, 2015.

\bibitem{moore2003multi}
B.~E. Moore and S.~Reich, ``Multi-symplectic integration methods for
  {H}amiltonian {PDE}s,'' {\em Future Generation Computer Systems}, vol.~19,
  no.~3, pp.~395--402, 2003.

\bibitem{zakharov1974threedimensional}
V.~Zakharov and E.~Kuznetsov, ``Three-dimensional solitons,'' {\em Zh. Eksp.
  Teor. Fiz}, vol.~66, pp.~594--597, 1974.

\bibitem{chen2011multi}
Y.~Chen, S.~Song, and H.~Zhu, ``The multi-symplectic {F}ourier pseudospectral
  method for solving two-dimensional {H}amiltonian {PDE}s,'' {\em J. Comput.
  Appl. Math.}, vol.~236, no.~6, pp.~1354--1369, 2011.

\bibitem{zabusky1965interaction}
N.~J. Zabusky and M.~D. Kruskal, ``Interaction of "solitons" in a collisionless
  plasma and the recurrence of initial states,'' {\em Phys. Rev. Lett.},
  vol.~15, no.~6, p.~240, 1965.

\bibitem{zhao2000multisymplectic}
P.~F. Zhao and M.~Z. Qin, ``Multisymplectic geometry and multisymplectic
  {P}reissmann scheme for the {K}d{V} equation,'' {\em J. Phys. A}, vol.~33,
  no.~18, pp.~3613--3626, 2000.

\bibitem{ascher2005symplectic}
U.~M. Ascher and R.~I. McLachlan, ``On symplectic and multisymplectic schemes
  for the {K}d{V} equation,'' {\em J. Sci. Comput.}, vol.~25, no.~1-2,
  pp.~83--104, 2005.

\bibitem{Hu2008new}
H.-C. Hu, ``New exact solutions of {Z}akharov--{K}uznetsov equation,'' {\em
  Commun. Theor. Phys. (Beijing)}, vol.~49, no.~3, pp.~559--561, 2008.

\bibitem{Nishiyama2012conservative}
H.~Nishiyama, T.~Noi, and S.~Oharu, ``Conservative finite difference schemes
  for the generalized {Z}akharov--{K}uznetsov equations,'' {\em J. Comput.
  Appl. Math.}, vol.~236, no.~12, pp.~2998--3006, 2012.

\bibitem{iwasaki1990cylindrical}
H.~Iwasaki, S.~Toh, and T.~Kawahara, ``Cylindrical quasi-solitons of the
  {Z}akharov--{K}uznetsov equation,'' {\em Phys. D}, vol.~43, no.~2-3,
  pp.~293--303, 1990.

\end{thebibliography}
\bibliographystyle{ieeetr}
\end{document}